\theoremstyle{plain}
\newtheorem{theorem}{Theorem}[section]
\newtheorem{lemma}[theorem]{Lemma}
\newtheorem{proposition}[theorem]{Proposition}
\theoremstyle{definition}
\newtheorem{assumption}[theorem]{Assumption}
\newtheorem{definition}[theorem]{Definition}
\theoremstyle{remark}
\newtheorem*{remarks}{Remarks}
\newcommand\R{\mathbb R}
\newcommand\Z{\mathbb Z}
\renewcommand{\a}{\alpha}
\renewcommand{\b}{\beta}
\newcommand{\e}{\varepsilon}
\newcommand{\eh}{\varepsilon(h)}
\newcommand{\SO}[1]{\operatorname{SO}(#1)}
\newcommand\id{I}
\newcommand\sym{\operatorname{sym}}
\newcommand{\WW}{{W^{2,2}_{\delta}}}
\newcommand\dist{\operatorname{dist}}
\newcommand\wto{\rightharpoonup}
\newcommand\wtto{\xrightharpoonup{2,\gamma}}
\newcommand\otto{\xrightharpoonup{osc,\gamma}}
\newcommand\stto{\xrightarrow{2,\gamma}}
\DeclareMathOperator{\spt}{spt}
\DeclareMathOperator{\secf}{II}
\renewcommand{\t}{\widetilde}
\title{Derivation of a homogenized nonlinear plate theory from 3d elasticity}
\date{}
\author{Peter Hornung \and Stefan Neukamm \and  Igor Vel\v{c}i\'{c}}
\begin{document}

\maketitle

\begin{abstract}
We derive, via simultaneous homogenization and dimension reduction,
the $\Gamma$-limit for thin elastic plates whose energy density
oscillates on a scale that is either comparable to, or much smaller than,
the film thickness. We consider the energy scaling that corresponds
to Kirchhoff's nonlinear bending theory of plates.

 \vspace{10pt}
 \noindent {\bf Keywords:}
 elasticity, dimension reduction, homogenization, nonlinear plate theory,
 two-scale convergence.

\end{abstract}

\section{Introduction}

Kirchhoff's nonlinear plate theory associates with a
deformation $u : S\to\R^3$ of a two-dimensional stress-free reference configuration $S\subset\R^2$
the bending energy
\begin{equation}\label{Neu:eq:1}
  \int_S Q_2(\secf(x'))\,dx',
\end{equation}
where $Q_2$ is the quadratic form from linear
elasticity, and $\secf$ denotes the second
fundamental form associated with $u$. The key condition on the admissible deformations $u$ is that
they must satisfy the isometry constraint
\begin{equation}
  \label{eq:isometry}
  \partial_\alpha
u\cdot\partial_\beta u=\delta_{\alpha\beta},\qquad\alpha,\beta\in\{1,2\}
\end{equation}
where $\delta_{\alpha\beta}$ denotes the Kronecker delta. Physically,
\eqref{Neu:eq:1} describes the elastic energy stored in a deformed plate that
can undergo large deformations but not shearing or stretching.

\newcommand{\super}[1]{^{(#1)}}
\def\uah{\boldsymbol u^{(h)}}
\def\vah{\boldsymbol v^{(h)}}
In \cite{FJM1} Kirchhoff's nonlinear plate
theory was rigorously derived as a zero-thickness $\Gamma$-limit from 3d nonlinear
elasticity. In this article we combine their result with homogenization. We consider
a plate with thickness $h\ll 1$ made of a composite material that
periodically oscillates with period $\e\ll 1$ in in-plane directions. We shall derive a
homogenized plate model via simultaneous homogenization and dimension
reduction in the limit $(h,\e)\to 0$  when the material period $\e$
and the thickness $h$ are either comparable or behave as $\e\ll h$, see
Theorem~\ref{T:main} below. The derived model is sensitive to the relative scaling of $h$ and $\e$. Our result generalizes recent results from \cite{Neukamm-12}
where the one-dimensional case is studied.
Regarding plates, related results have been obtained for
different energy scalings: In \cite{Braides-Fonseca-Francfort-00,
  Babadijan-Baia-06} the membrane regime has been
considered. Recently,

the energy scaling corresponding to the \mbox{von-K\'arm\'an} plate model was studied in \cite{NeuVel}.

This article is organized as follows. Section~\ref{S:framework} introduces
the general framework and discusses the main results.
In Section~\ref{S:two-scale} we recall the notion of two-scale
convergence and characterize the two-scale limit of nonlinear strains.
In Section~\ref{S:proof} we prove our main result.

\section{General framework and main result}\label{S:framework}

From now on, $S\subset\R^2$ denotes a bounded Lipschitz domain whose boundary is piecewise $C^1$.
The piecewise $C^1$-condition is necessary only for the proof of the upper bound and can be slightly
relaxed, cf. \cite{Hornung-arma2}.
\\
For $h > 0$ and $I:=(-\frac{1}{2},\frac{1}{2})$, we denote by
$\Omega_h:=S\times hI$ the reference configuration of the thin plate of thickness $h$.
The elastic energy per unit volume associated with a deformation $v^h:\Omega_h\to\R^3$
is given by
\begin{equation}\label{Neu:eq:0}
  \frac{1}{h}\int_{\Omega_h}W(\frac{z'}{\e},\nabla v^h(z))\,dz.
\end{equation}
Here and below $z'=(z_1,z_2)$ stands for the in-plane coordinates of a generic
element $z=(z_1,z_2,z_3)\in\Omega_h$ and $W$ is a energy
density that models the elastic properties of a periodic composite.
\begin{assumption}\label{assumption}
  We assume that
  \begin{equation*}
    W:\R^2\times\R^{3\times
      3}\to [0,\infty],\qquad (y,F)\mapsto W(y,F)
  \end{equation*}
  is measurable and $[0,1)^2$-periodic in $y$ for all $F$. Furthermore, we assume that for
  almost every $y\in\R^2$, the map $\R^{3\times 3}\ni F\mapsto W(y,F)\in
  [0,\infty]$ is continuous and satisfies the following properties:
  \begin{align}
    \tag{FI}\label{ass:frame-indifference}
    &\text{(frame indifference)}\\
    &\notag\qquad W(y, RF)=W(y, F)\quad\text{ for
      all $F\in\R^{3\times 3}$, $R\in\SO 3$;}\\[0.2cm]
    \tag{ND}\label{ass:non-degenerate}
    &\text{(non degeneracy)}\\
    &\notag\qquad W(y,  F)\geq c_1\dist^2( F,\SO 3)\quad\text{ for all
      $ F\in\R^{3\times 3}$;}\\
    &\notag\qquad W(y, F)\leq c_2\dist^2( F,\SO 3)\quad\text{ for all
      $ F\in\R^{3\times 3}$ with $\dist^2( F,\SO 3)\leq\rho$;}\\[0.2cm]
    \tag{QE}\label{ass:expansion}
    &\text{(quadratic expansion at identity)}\\
    &\notag\qquad
    \lim_{G\to 0}\frac{W(y, \id+ G) - Q(y, G)}{|G|^2} = 0\\
    &\notag\qquad\text{for some quadratic form $Q(y,\cdot)$ on $\R^{3\times 3}$.}
  \end{align}
  Here $c_1,c_2$ and $\rho$ are positive constants which are
  fixed from now on.
\end{assumption}
We define $\Omega := S\times I$. As in \cite{FJM1} we rescale the out-of-plane coordinate: for
$x = (x',x_3)\in\Omega$ consider the scaled deformation
$u^h(x',x_3):=v^h(x',hx_3)$. Then \eqref{Neu:eq:0} equals
\begin{equation}
  \label{eq:3'}
  \mathcal E^{h,\e}(u^h):=\int_{\Omega}W(\frac{x'}{\e},\nabla_h u^h(x))\,dx,
\end{equation}
where $\nabla_h u^h:=\big(\,\nabla' u^h,\,\frac{1}{h}\partial_3
u^h\,\big)$ denotes the scaled gradient, and
$\nabla'u^h:=\big(\,\partial_1u^h,\,\partial_2 u^h\,\big)$ denotes the
gradient in the plane.

We recall some known results on dimension reduction in the homogeneous case when  $W(y,F)=W(F)$. As
explained in \cite{FJM2} a hierarchy of
plate models can be derived from $\mathcal E^h:=\mathcal E^{h,1}$ in the zero-thickness limit $h\to 0$.
The different limiting models are distinguished by the scaling of the
elastic energy relative to the thickness. In \cite{LeDret-Raoult-93}
it is shown that the scaling  $\mathcal E^h\sim 1$ leads to a membrane model, which is a fully nonlinear plate
model for plates without resistance to compression. In the regime $\mathcal E^h\sim h^4$
finite energy deformations converge to rigid deformations and, as
shown in \cite{FJM2}, $h^{-4}\mathcal E^h$
 converges to a plate model of ``von-K\'arm\'an''-type.
n

In this article we study the \textit{bending regime} $\mathcal
E^h\sim h^2$, which, as shown in \cite{FJM1}, leads to Kirchhoff's nonlinear plate
model: as $h\to 0$ the energy $h^{-2}\mathcal E^h$
$\Gamma$-converges to the functional
\eqref{Neu:eq:1}, with $Q_2 : \R^{2\times 2}\to\R$
given by the relaxation formula
\begin{equation*}
  Q_2(A)=\min_{d\in\R^3}Q\left(\sum_{\alpha,\beta=1}^2A_{\alpha\beta}(e_\alpha\otimes
  e_\beta)+d\otimes e_3\right);
\end{equation*}
here, $Q$ denotes the quadratic form from \eqref{ass:expansion}.

We will see that in the non-homogeneous case the effective
quadratic form $Q_2$ is determined by a relaxation formula that
is more complicated and requires the solution of a corrector problem.
In particular, our analysis shows that in-plane oscillations of the deformation couple with the behavior
in the out-of-plane direction. As a consequence the effective behavior
will depend on the relative scaling between the thickness $h$ and the
material period $\e$. To make this precise we assume that $\e$ and $h$
are coupled as follows:

\begin{assumption}\label{assumption:gamma}
  Let $\gamma\in(0,\infty]$ denote a constant which is fixed
  throughout this article. We assume that $\e=\eh$ is a monotone
  function from $(0,\infty)$ to $(0,\infty)$ such that $\eh\to 0$ and
  $\frac{h}{\eh}\to \gamma$ as $h\to 0$.
\end{assumption}

The effective behavior of the homogenized plate with reduced
dimension can be computed by means of a relaxation formula that we
introduce next. We need to introduce some function spaces of periodic functions.
From now on, $Y = [0,1)^2$, and we denote by $\mathcal Y$ the set $Y$ endowed
with the torus topology, so that functions on $\mathcal Y$ will be $Y$-periodic.
\\
We write $C(\mathcal Y)$, $C^k(\mathcal Y)$ and $C^\infty(\mathcal Y)$ for the Banach
spaces of $Y$-periodic functions on $\R^2$ that are continuous,
$k$-times continuously differentiable and smooth, respectively.
Moreover, $H^1(I\times\mathcal Y)$ denotes the closure of
$C^\infty(I,C^\infty(\mathcal Y))$ with respect to the norm in
$H^1(I\times Y)$ and we write $\mathring H^1(I\times\mathcal Y)$ for the subspace of
functions $f\in H^1(S\times\mathcal Y)$ with $\iint_{I\times Y}f=0$.
The definitions extend in the obvious way to vector-valued functions.

\begin{definition}[Relaxation formula]
  \label{D:relaxation-formula}
  Let $Q$ be as in Assumption~\ref{assumption}. For $x_3\in I$ and $A,B\in\R^{2\times
    2}$, define
  \begin{equation*}
    \Lambda(x_3, A,B):=\left(\sum_{\alpha,\beta=1}^2(B_{\alpha\beta}+x_3
      A_{\alpha\beta})(e_\alpha\otimes e_\beta)\right).
  \end{equation*}
  \begin{enumerate}[(a)]
  \item   For $\gamma\in(0,\infty)$ we define
    $Q_{2,\gamma}:\R^{2\times2}_{\sym}\to[0,\infty)$ by
    \begin{eqnarray*}
      Q_{2,\gamma}(A)&:=&
      \inf_{B,\phi}\iint_{I\times Y}Q\left(
        y,\;\Lambda(x_3,A,B)+(\nabla_y
        \phi\,,\,\tfrac{1}{\gamma}\partial_3\phi)\,\right)\,dy\,dx_3
    \end{eqnarray*}
    where the infimum is taken over all $B\in\R^{2\times 2}_{\sym}$
    and $\phi\in H^1(I\times\mathcal Y,\R^3)$.
  \item   For $\gamma=\infty$ we define
    $Q_{2,\infty}:\R^{2\times2}_{\sym}\to[0,\infty)$ by
    \begin{eqnarray*}
      Q_{2,\infty}(A)&:=&
      \inf_{B,\phi, d}\iint_{I\times Y}Q\left(
        y,\;\Lambda(x_3, A,B)+(\nabla_y
        \phi\,,\,d)\,\right)\,dy\,dx_3
    \end{eqnarray*}
    where the infimum is taken over all $B\in\R^{2\times 2}_{\sym}$,
    $\phi\in L^2(I,H^1(\mathcal Y,\R^3))$ and $d\in L^2(I,\R^3)$
  \end{enumerate}
  \end{definition}

Kirchhoff's plate model is defined for pure \textit{bending deformations} of $S$ into $\R^3$;
precisely:
\begin{eqnarray}
  \WW(S,\R^3) &:=& \Bigg\{\,u\in W^{2,2}(S,\R^3)\, : \, u\text{ satisfies }\eqref{eq:isometry}\mbox{ a.e. in }S\,\Bigg\}.
\end{eqnarray}
With each $u \in \WW(S)$ we associate
its normal $n :=\partial_1 u \wedge \partial_2 u$,
and we define its second fundamental form $\secf: S\to\R^{2\times 2}_{\sym}$
by defining its entries as
\begin{equation}
  \secf_{\alpha\beta} = \partial_\alpha u\cdot\partial_\beta n
= -\partial_{\alpha}\partial_{\beta} u\cdot n.
\end{equation}
We write $\secf^h$ and $n^h$ for the second fundamental form
and normal associated with some $u^h\in\WW(S,\R^3)$.
The $\Gamma$-limit is a functional of the form \eqref{Neu:eq:1}
trivially extended to $L^2(\Omega,\R^3)$ by infinity: for $\gamma\in(0,\infty]$ define $\mathcal E_\gamma:L^2(\Omega,\R^3)\to[0,\infty]$,
\begin{equation*}
  \mathcal E_\gamma(u):=
  \left\{%
    \begin{aligned}
      &\int_S Q_{2,\gamma}(\secf(x'))\,dx'&\quad&\text{if }u\in\WW(S,\R^3)\\
      &+\infty&&\text{otherwise.}
    \end{aligned}%
    \right.
\end{equation*}
We tacitly identify functions on $S$ with their trivial
extension to $\Omega=S\times I$: above $u\in\WW(S,\R^3)$ means that $u(x',x_3)=\overline u(x'):=\fint_I
u(x',z)\,dz$ for almost every $x_3\in I$, and $\overline u\in\WW(S,\R^3)$.
Our main result is the following:
\begin{theorem}
  \label{T:main}
  Suppose that Assumptions~\ref{assumption} and \ref{assumption:gamma}
  are satisfied. Then:
  \begin{enumerate}[(i)]
  \item (Lower bound). If $\{u^h\}_{h>0}$ is a sequence with
    $u^h-\fint_\Omega u^h\,dx\to
    u$ in $L^2(\Omega,\R^3)$, then
    \begin{equation*}
      \liminf\limits_{h\to 0}h^{-2}\mathcal E^{h,\eh}(u^h)\geq \mathcal E_\gamma(u).
    \end{equation*}
  \item (Upper bound). For every $u\in\WW(S,\R^3)$ there exists a
    sequence $\{u^h\}_{h>0}$ with $u^h\to u$ strongly in
    $L^2(\Omega,\R^3)$ such that
    \begin{equation*}
      \lim\limits_{h\to 0}h^{-2}\mathcal E^{h,\eh}(u^h)=\mathcal E_\gamma(u).
    \end{equation*}
  \end{enumerate}
\end{theorem}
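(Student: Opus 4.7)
The proof follows the standard two-step $\Gamma$-convergence programme: compactness plus a lower bound for every admissible sequence, then a matching recovery sequence.

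\textbf{Lower bound.} One may assume $\liminf_h h^{-2}\mathcal{E}^{h,\eh}(u^h) < \infty$. The non-degeneracy part of Assumption~\ref{assumption} then gives $\int_\Omega \dist^2(\nabla_h u^h,\SO{3})\,dx \leq Ch^2$, and the geometric rigidity argument of \cite{FJM1}, applied on a grid of in-plane squares of size $h$ and patched as in that reference, produces a rotation field $R^h \in W^{1,2}(S,\SO{3})$ with $\int_\Omega |\nabla_h u^h - R^h|^2 \leq Ch^2$ and a uniform $W^{1,2}$-bound. Along a subsequence $R^h \to R$ in $L^2$ and $u^h - \fint_\Omega u^h\,dx \to u$ in $L^2$, with $u \in \WW(S,\R^3)$, $(\partial_1 u \mid \partial_2 u \mid n) = R$, and second fundamental form $\secf$. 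I then introduce the scaled strain $G^h := h^{-1}\bigl((R^h)^T\nabla_h u^h - \id\bigr)$, which is bounded in $L^2(\Omega,\R^{3\times 3})$.

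Using the structure result for two-scale limits of thin-film strains established in Section~\ref{S:two-scale}, a subsequence satisfies $G^h \wtto G$ with
\begin{equation*}
\sym G(x',x_3,y) \;=\; \sym\Lambda(x_3,\secf(x'),B(x')) \;+\; \sym\!\left(\nabla_y\phi,\,\tfrac{1}{\gamma}\partial_3\phi\right)
\end{equation*}
for some $B \in L^2(S,\R^{2\times 2}_{\sym})$ and $\phi \in L^2(S,H^1(I\times\mathcal{Y},\R^3))$; in the case $\gamma=\infty$ the third column is replaced by an independent $d \in L^2(S\times I\times Y,\R^3)$. Combining the truncation trick from \cite{FJM1} with the quadratic expansion at identity lets me replace $h^{-2}W(x'/\eh,\nabla_h u^h)$ by $Q(y,\sym G^h)$ up to an $L^1$-error that vanishes. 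Two-scale lower semicontinuity of the convex integrand $(y,F)\mapsto Q(y,\sym F)$ then yields
\begin{equation*}
\liminf_{h\to 0} h^{-2}\mathcal{E}^{h,\eh}(u^h) \;\geq\; \int_S \iint_{I\times Y} Q\bigl(y,\sym G(x',x_3,y)\bigr)\,dy\,dx_3\,dx',
\end{equation*}
and minimising pointwise in $x'$ over the free parameters $B(x')$ and $\phi(x',\cdot,\cdot)$ (and $d(x',\cdot,\cdot)$ when $\gamma=\infty$) produces exactly $\int_S Q_{2,\gamma}(\secf)\,dx' = \mathcal{E}_\gamma(u)$.

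\textbf{Upper bound.} By the density of smooth isometric immersions in $\WW(S,\R^3)$ (see \cite{Hornung-arma2}) and diagonalisation, it suffices to construct a recovery sequence for $u \in C^\infty\cap\WW(S,\R^3)$. For such $u$ I would choose, for each $x'$, a pair $(B(x'),\phi(x',\cdot,\cdot))$ — and, if $\gamma=\infty$, an additional $d(x',\cdot,\cdot)$ — that is smooth, $Y$-periodic and nearly optimal in the definition of $Q_{2,\gamma}(\secf(x'))$; standard mollification and truncation in $y$ guarantee such smooth almost-minimisers. The recovery sequence has the form
\begin{equation*}
u^h(x',x_3) \;=\; u(x') + hx_3\,n(x') + h^2\beta(x',x_3) + h\eh\,\phi\!\left(x',x_3,\tfrac{x'}{\eh}\right) + \text{lower-order terms},
\end{equation*}
where $\beta$ encodes the constant-in-$y$ symmetric corrector $B$ and, when $\gamma=\infty$, the oscillating profile is modified so that $h^{-1}\partial_3$ of it realises $d$ independently. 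Computing $\nabla_h u^h$ and using $h/\eh\to\gamma$, the scaled strain converges two-scale in a strong sense to the chosen optimal profile; the quadratic expansion together with dominated convergence yields the matching upper limit.

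\textbf{Main obstacle.} The heart of the argument is the two-scale identification of $\sym G$ underlying the lower bound: one must show that the limit strain is precisely $\sym\Lambda(x_3,\secf,B) + \sym(\nabla_y\phi,\tfrac{1}{\gamma}\partial_3\phi)$ and that $B$, $\phi$ (and $d$ when $\gamma=\infty$) range freely, so that no hidden constraints from the isometry of $u$ or from the rigidity estimate survive. The dichotomy $\gamma<\infty$ versus $\gamma=\infty$ is intrinsic here: for finite $\gamma$ the period and thickness are comparable and a genuine 3d cell problem on $I\times\mathcal{Y}$ arises, whereas for $\gamma=\infty$ the cell problem decouples in $x_3$ and a new out-of-plane degree of freedom emerges. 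Reconciling in-plane oscillations at scale $\eh$ with the thin-film rigidity while correctly capturing both regimes is the delicate technical point addressed in Section~\ref{S:two-scale}.
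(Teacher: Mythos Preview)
Your lower bound argument is essentially the paper's: finite bending energy, FJM rigidity, two-scale limit of the scaled strain identified via Proposition~\ref{P:strain}, Taylor expansion plus two-scale lower semicontinuity, and pointwise minimisation over $B$ and $\phi$. That part is fine.

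The gap is in the upper bound. Your ansatz places the in-plane corrector $B$ in a term $h^2\beta(x',x_3)$, but a quick computation shows this cannot work: $\nabla_h(h^2\beta)=(h^2\nabla'\beta,\,h\partial_3\beta)$, so its contribution to the scaled strain $h^{-1}(\nabla_h u^h-R)$ is $(h\nabla'\beta,\,\partial_3\beta)$, which in the limit affects only the third column and never the $2\times 2$ block. Since $\phi$ is $Y$-periodic, $\nabla_y\phi$ has zero $Y$-average and cannot supply a constant-in-$y$ block either. Thus your construction does not recover $B$ at all, and the upper bound fails for any limit with $\secf\neq 0$.

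The paper (following Schmidt \cite{Schmidt-07}) produces $B$ via an order-$h$ correction
\[
v^h(x)=u(x')+h\big[(x_3+\alpha(x'))\,n(x')+(g(x')\cdot\nabla')u(x')\big],
\]
where $\alpha$ and $g$ solve $B=\sym\nabla' g+\alpha\,\secf$. This equation is solvable only on $\{\secf\neq 0\}$, and the construction therefore requires two further ingredients you are missing: first, the observation (from Lemma~\ref{L:Qgamma}) that the optimal $B(x')$ depends \emph{linearly} on $\secf(x')$ and hence vanishes on $\{\secf=0\}$; second, the refined density class $\mathcal A(S)$ of Lemma~\ref{le71}, consisting of smooth isometries for which the equation above admits smooth solutions whenever $B$ vanishes near $\{\secf=0\}$. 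Mere $C^\infty$ density of isometric immersions is not enough. Relatedly, your ``Main obstacle'' paragraph has the situation backwards: $B$ does \emph{not} range freely as a two-scale limit (see the remarks after Proposition~\ref{P:strain}); the point is rather that the constraint on $B$ is harmless because the energy-optimal $B$ automatically satisfies it.
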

This theorem is complemented by the following compactness result from
\cite{FJM1}, which in particular shows that
$\{\mathcal E^{h,\eh}\}_{h>0}$ is equi-coercive on $L^2(\Omega,\R^3)$.
\begin{theorem}[\mbox{\cite[Theorem~4.1]{FJM1}}]
  \label{T:FJM-compactness}
  Suppose a sequence $u^h\in H^1(\Omega,\R^3)$ has finite
  bending energy, that is
  \begin{equation*}
    \limsup\limits_{h\to 0}\frac{1}{h^2}\int_{\Omega}\dist^2(\nabla_h
    u^h(x),\SO 3)\,dx<\infty.
  \end{equation*}
  Then there exists $u\in
  W^{2,2}_{\delta}(S,\R^3)$ such that
  \begin{align*}
    u^h-\fint_\Omega u^h\,dx&\to u,&\qquad&\text{strongly in }L^2(\Omega,\R^3),\\
    \nabla_h u^h&\to(\,\nabla'u,\,n\,)&\qquad&\text{strongly in
    }L^2(\Omega,\R^{3\times 3}),
  \end{align*}
  as $h\to 0$ after passing to subsequences and extending $u$ and $n$ trivially to $\Omega$.
\end{theorem}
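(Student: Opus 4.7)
The plan is to follow the argument of Friesecke, James and Müller, whose central ingredient is their geometric rigidity estimate: for every bounded Lipschitz domain $U\subset\R^3$ there is a constant $C(U)$, invariant under isotropic rescaling of $U$, such that each $v\in H^1(U,\R^3)$ admits a rotation $R\in\SO 3$ with
\[
\int_U|\nabla v - R|^2 \leq C(U)\int_U\dist^2(\nabla v,\SO 3).
\]

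\textbf{Step 1 (localization and local rotations).} I would undo the rescaling by setting $v^h(z',z_3) := u^h(z',z_3/h)$ on $\Omega_h := S\times hI$, so that $\nabla v^h$ corresponds to $\nabla_h u^h$ and the bending energy hypothesis becomes $\int_{\Omega_h}\dist^2(\nabla v^h,\SO 3)\leq Ch^3$. Partition $S$ into a lattice of squares $\{S_i\}$ of side $h$ and apply rigidity on each thin cylinder $B_i := S_i\times hI$. Since $B_i$ is an isotropic dilation of a fixed reference cube, the rigidity constant is uniform in $h$ and in $i$, and one obtains rotations $R^h_i\in\SO 3$ with
\[
\int_{B_i}|\nabla v^h - R^h_i|^2 \leq C\int_{B_i}\dist^2(\nabla v^h,\SO 3).
\]

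\textbf{Step 2 (approximating rotation field).} Summing in $i$ gives a piecewise constant map $R^h:S\to\SO 3$ with $\int_\Omega|\nabla_h u^h - R^h|^2 \leq Ch^2$. Applying rigidity on unions $B_i\cup B_j$ of adjacent cylinders and comparing with $R^h_i$ and $R^h_j$ yields $|R^h_i - R^h_j|^2 \leq Ch^{-3}\int_{B_i\cup B_j}\dist^2(\nabla v^h,\SO 3)$, which after summation over adjacencies gives the finite-difference bound $\int_S|R^h(\cdot+he_\alpha) - R^h|^2 \leq Ch^2$. Mollification on scale $h$, followed by a pointwise projection onto $\SO 3$ (the nearest-point projection is smooth in a tubular neighbourhood), produces a smooth map $\hat R^h:S\to\SO 3$ that is uniformly bounded in $W^{1,2}(S,\R^{3\times 3})$ and satisfies $\|\nabla_h u^h - \hat R^h\|_{L^2(\Omega)}\to 0$.

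\textbf{Step 3 (compactness and identification of the limit).} By Rellich and the uniform $W^{1,2}$-bound, a subsequence of $\hat R^h$ converges strongly in $L^2(S)$ and weakly in $W^{1,2}(S)$ to some $R\in W^{1,2}(S,\SO 3)$, hence $\nabla_h u^h\to R$ strongly in $L^2(\Omega,\R^{3\times 3})$. Since $h^{-1}\partial_3 u^h$ is $L^2$-bounded, $\partial_3 u^h\to 0$ in $L^2$, and Poincaré applied slicewise in $x_3$ yields that $u^h - \fint_\Omega u^h$ converges strongly in $L^2(\Omega)$ to a limit $u$ independent of $x_3$. Identifying the first two columns of $R$ with $\nabla' u$ shows $u\in W^{2,2}(S,\R^3)$ and $\partial_\alpha u\cdot\partial_\beta u = \delta_{\alpha\beta}$ almost everywhere, i.e.\ $u\in\WW(S,\R^3)$; the third column is $\partial_1 u\wedge\partial_2 u = n$.

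The main obstacle is Step~2: producing an $\SO 3$-valued approximating field that is simultaneously close to $\nabla_h u^h$ in $L^2$ and bounded in $W^{1,2}(S)$. The nonlinearity of $\SO 3$ forbids naive averaging of the $R^h_i$, so one must first extract discrete difference bounds via rigidity on adjacent thin cubes with the correct $h$-scaling of the constant, then smooth, then nonlinearly project, preserving both the $L^2$-proximity to $\nabla_h u^h$ and the gradient bound at every stage.
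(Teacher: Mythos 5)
This theorem is not proved in the paper at all: it is imported verbatim from \cite[Theorem~4.1]{FJM1}, and your proposal is essentially a faithful reconstruction of the Friesecke--James--M\"uller argument (localized geometric rigidity on $h$-cubes, difference-quotient estimates for the piecewise constant rotation field, mollification, Rellich). The one place where your sketch overreaches is the nearest-point projection onto $\SO 3$ at the end of Step~2: the $L^2$ bound $\int_S|R^h(\cdot+he_\alpha)-R^h|^2\leq Ch^2$ gives no pointwise control on the local oscillation of $R^h$, so there is no guarantee that the mollified field lies in the tubular neighbourhood of $\SO 3$ where the projection is smooth. Fortunately this step is unnecessary: it suffices to keep the mollified, merely $\R^{3\times3}$-valued field $\hat R^h$ (bounded in $W^{1,2}$ and $L^2$-close to $R^h$) to conclude $R\in W^{1,2}(S,\R^{3\times3})$, while the fact that $R$ takes values in $\SO 3$ a.e.\ follows from the a.e.\ convergence (along a subsequence) of the piecewise constant, $\SO 3$-valued $R^h$. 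This is exactly how the statement is packaged in Lemma~\ref{t6f2} of the paper, where $\t R^h$ is only required to be $H^1(S,\R^{3\times3})$-valued. A second, standard technicality you elide is the treatment of lattice squares meeting $\partial S$, which requires the rigidity constant to be uniform over the boundary cells of a Lipschitz domain.
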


Theorem \ref{T:main} and Theorem \ref{T:FJM-compactness} imply by standard arguments from the theory of
$\Gamma$-convergence that minimizers of functionals of the form
$$
h^{-2}\mathcal E^{h,\eh}(\cdot)+\text{ ``(rescaled) dead loads''},
$$
subject to certain boundary conditions, converge to minimizers of
$$
\mathcal E_\gamma(\cdot)+\text{ ``dead loads''},
$$
subject to certain
boundary conditions. For  details see \cite{FJM2}.

In the special case when $W(y,F)=W(F)$ is homogeneous Theorem~\ref{T:main}
reduces to the result in \cite{FJM1}. The proof
 of our main result emulates their argument as far as possible.
\\
We now explain our approach. The bending regime is a borderline case in the hierarchy of plate
models. On one hand it allows for large deformations, on the other hand it corresponds to small
strains: By Theorem~\ref{T:FJM-compactness} a sequence $\{u^h\}_{h>0}$ with \textit{finite bending
  energy}
in general converges to a non-trivial deformation.
However, the associated non-linear strain $\sqrt{(\nabla_hu^h)^t(\nabla_h
  u^h)}-\id$ converges to zero. Indeed, let
\begin{equation}\label{eq:strain}
  E^h:=\frac{\sqrt{(\nabla_hu^h)^t(\nabla_h u^h)}-\id}{h}
\end{equation}
denote the \textit{scaled non-linear strain} associated with $u^h$. Then due
to the elementary inequality $\left| \sqrt{F^TF}-\id \right| \leq
\dist(F,\SO 3)$ we find that $\{E^h\}_{h>0}$ is bounded in $L^2$
when $\{u^h\}_{h>0}$ has finite bending energy.

The smallness of the nonlinear strain is crucial for our extension to simultaneous homogenization and dimension reduction: By \eqref{ass:expansion} the elastic energy is related
to the
nonlinear strain in a quadratic way -- indeed,  we formally have
\begin{equation}\label{Neu:eq:3}
  \frac{1}{h^2}\mathcal E^{h,\e}(u^h)\approx \int_\Omega Q(\frac{x'}{\e},E^h(x))\,dx.
\end{equation}
Heuristically, the right-hand side is obtained by linearizing the
stress-strain relation, while preserving the geometric non-linearity.

Due to the convexity of
the right-hand side in \eqref{Neu:eq:3} only oscillations of
$\{E^h\}_{h>0}$ that emerge precisely on scale $\e$ are relevant for
homogenization. A tool to describe such oscillations is
two-scale convergence. In Section~\ref{S:two-scale} we characterize (partially)
the possible two-scale limits of $\{E^h\}_{h>0}$. This is
the main ingredient for the lower bound in Theorem~\ref{T:main}.
\\
Assume $u^h$ converges to some bending deformation with second fundamental form $\secf$. Then
any two-scale accumulation point of $\{E^h\}_{h>0}$ can be written in the form
\begin{equation}\label{Neu:eq:5}
  x_3
\left(
  \begin{array}{cc}
    \secf(x') & \begin{array}{c}0\\0\end{array}\\
    0\;\;\;\;0 &0
  \end{array}\right) + \widetilde E(x,y)
\end{equation}
where $\widetilde E:\Omega\times Y\to\R^{3\times 3}_{\sym}$ is a relaxation
field that captures oscillations and is a priori ``unknown''. In
Proposition~\ref{P:strain} we prove that $\widetilde E$ has to be of
specific form.
The $\Gamma$-limit of $h^{-2}\mathcal E^h$ is then
obtained by relaxation:
\begin{equation*}
  \inf_{\widetilde E}\int_\Omega\int_Y Q\left(y,  x_3
{\scriptsize\left(
  \begin{array}{cc}
    \secf(x') & \begin{array}{c}0\\0\end{array}\\
    0\;\;\;\;0 &0
  \end{array}\right)} + \widetilde E(x,y)\,\right)\,dy\,dx,
\end{equation*}
where the infimum is taken over all $\tilde{E}$ of the specific form given in Proposition \ref{P:strain}.

We conclude this section by discussing the dependency of our limiting model on the parameter $\gamma$, which describes the relative
scaling between $h$ and $\e$.
The relaxed quadratic form $Q_{2,\gamma}$ continuously depends on
$\gamma$. In fact, with  \cite[Lemma~5.2]{NeuVel} at hand one can
easily identify the limits
\begin{equation*}
  \lim\limits_{\gamma\to
    0}Q_{2,\gamma}(A)\qquad\text{and}\qquad\lim\limits_{\gamma\to\infty}Q_{2,\gamma}(A)\qquad (A\in\R^{2\times2}),
\end{equation*}
which  yield proper quadratic forms on $\R^{2\times 2}$ that
vanish on skew-symmetric matrices and are positive definite on
symmetric matrices. In particular, the limit for $\gamma\to\infty$
coincides with $Q_{2,\infty}$. The limit for $\gamma\to 0$ can be
identified as well: We introduce the dimension reduced quadratic
form $Q_2(y,A)$ for all $A\in\R^{2\times 2}$ via
\begin{equation*}
  Q_2(y,A)=\min_{d\in\R^3}Q\left(y,\sum_{\alpha,\beta=1}^2A_{\alpha\beta}(e_\alpha\otimes
    e_\beta)+d\otimes e_3\right).
\end{equation*}
Then $Q_{2,\gamma}(A)$ converges for $\gamma\to 0$ to
\begin{eqnarray*}
  Q_{2,0}(A)&:=&
  \inf_{B,\zeta,\varphi}\iint_{I\times Y}Q_2\left(
    y,\;A+x_3B+\sym(\nabla_y\zeta+x_3\nabla_y^2\varphi)\right)\,dy\,dx_3
\end{eqnarray*}
where the infimum is taken over all $B\in\R^{2\times 2}_{\sym}$,
$\zeta\in H^1(\mathcal Y,\R^2)$ and $\varphi\in H^2(\mathcal Y)$.

A similar behavior has been observed in \cite{Neukamm-12, NeuVel}
where also the case $\gamma=0$ is considered (for rods and von-K\'arm\'an
plates, respectively). In the von-K\'arm\'an case (see \cite{NeuVel}) it turns out
that in the regime $h\ll\eh$ the limit $\gamma\to 0$ of the quadratic energy
density indeed recovers the energy density obtained via
$\Gamma$-convergence. It
is not clear whether or not this picture extends to the bending regime.

\section{Two-scale limits of the nonlinear strain}\label{S:two-scale}

Two-scale convergence was introduced in \cite{Nguetseng-89,
  Allaire-92} and has been extensively applied to various problems in
homogenization. In this article we work with the following variant of
two-scale convergence which is adapted to dimension reduction.
\begin{definition}[two-scale convergence]
  \label{D:two-scale-thin}
  We say a bounded sequence $\{f^h\}_{h>0}$ in $L^2(\Omega)$ two-scale
  converges to $f\in L^2(\Omega\times Y)$ and we write $f^h\wtto f$, if
  \begin{equation*}
    \lim\limits_{h\to 0}\int_\Omega
    f^h(x)\psi(x,\frac{x'}{\eh})\,dx=\iint_{\Omega\times Y} f(x,y)\psi(x,y)\,dy\,dx
  \end{equation*}
  for all $\psi\in C^\infty_0(\Omega,C(\mathcal Y))$. When
  $||f^h||_{L^2(\Omega)}\to||f||_{L^2(\Omega\times Y)}$ in addition,
  we say that $f^h$ strongly two-scale converges to $f$ and write
  $f^h\stto f$. For vector-valued functions, two-scale convergence is defined componentwise.
\end{definition}

Since we identify functions on $S$ with their trivial
extension to $\Omega$, the definition above contains the standard
notion of two-scale convergence on $S\times Y$ as a special case. Indeed, when $\{f^h\}_{h>0}$ is a sequence in
$L^2(S)$, then $f^h\wtto f$ is equivalent to
\begin{equation*}
  \lim\limits_{h\to 0}\int_S
  f^h(x')\psi(x',\frac{x'}{\eh})\,dx'=\iint_{S\times Y} f(x',y)\psi(x',y)\,dy\,dx'
\end{equation*}
for all  $\psi\in C^\infty_0(S,C(\mathcal Y))$.

The main ingredient in the proof of the lower bound part of
Theorem~\ref{T:main} is the following characterization of the possible
two-scale limits of nonlinear strains.

\begin{proposition}
\label{P:strain}
Let $\{u^h\}_{h>0}$ be a sequence of deformations with finite bending
energy, let $u\in\WW(S,\R^3)$ with second fundamental form $\secf$, and assume that
\begin{align*}
  u^h-\fint_\Omega u^h\,dx&\to u&\qquad&\text{strongly in
  }L^2(\Omega,\R^3),\\
  E^h:=\frac{\sqrt{(\nabla_h u^h)^t\nabla_h u^h}-\id}{h}&\wtto: E&\qquad&\text{weakly two-scale}
\end{align*}
for some $E\in L^2(\Omega\times Y; \R^{3\times 3})$.
\begin{enumerate}[(a)]
\item If $\gamma\in(0,\infty)$ then
  there exist
  $
  B \in L^2(S, \R^{2\times 2}_{\sym}),
  $
  and $\phi\in L^2(S,\mathring H^1(I \times \mathcal Y,\R^3))$ such that
  \begin{equation}\label{P:strain:1a}
    E(x, y) =
    \left(
      \begin{array}{cc}
        x_3\secf(x')+B(x') & \begin{array}{c}0\\0\end{array}\\
        0\;\;\;\;0 &0
      \end{array}\right)
    +
    \sym\left (\nabla_y \phi(x, y)\,,\,\tfrac{1}{\gamma}
      \partial_3 \phi(x, y) \right).
  \end{equation}
\item If $\gamma=\infty$ then there exist
  $B \in L^2(S, \R^{2\times 2}_{\sym})$, $\phi\in L^2(\Omega,\mathring
  H^1(\mathcal Y,\R^3))$, and $d\in L^2(\Omega,\R^3)$ with
  \begin{equation}\label{P:strain:1b}
    E(x, y) =
    \left(
      \begin{array}{cc}
        x_3\secf(x')+B(x') & \begin{array}{c}0\\0\end{array}\\
        0\;\;\;\;0 &0
      \end{array}\right)
    +
    \sym\left (\nabla_y \phi(x, y)\,,\,d(x) \right).
  \end{equation}
\end{enumerate}
\end{proposition}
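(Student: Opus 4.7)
The plan is to linearize the nonlinear strain via the Friesecke--James--M\"uller rigidity estimate, extract two-scale limits, and identify their structure in two stages: the bending term $x_3\secf$ comes from commuting partial derivatives of $u^h$, while the oscillatory correctors $\phi$ (and, in the case $\gamma=\infty$, the additional field $d$) come from a two-scale compactness argument for scaled symmetric gradients.

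\textbf{Step 1 (rigidity and linearization).} Apply the rigidity estimate on in-plane boxes of side $h$ and mollify to obtain $R^h\in C^1(\bar S,\R^{3\times 3})$ satisfying $R^h\to R:=(\nabla'u,n)$ strongly in $L^2(S)$, $\|\nabla'R^h\|_{L^2(S)}\leq C$, and $\|\nabla_hu^h-R^h\|_{L^2(\Omega)}\leq Ch$. Set $G^h:=(R^{h,T}\nabla_hu^h-\id)/h$, which is bounded in $L^2(\Omega)$. A Taylor expansion of $F\mapsto\sqrt{F^TF}$ near $\id$, combined with truncation on the small set $\{h|G^h|>\delta\}$, yields $E^h-\sym G^h\to 0$ strongly in $L^2$. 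Hence $E=\sym G$ for any two-scale cluster point $G$ of $G^h$.

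\textbf{Step 2 (bending contribution).} The equality $\partial_3\partial_\alpha u^h=\partial_\alpha\partial_3 u^h$ and the $x_3$-independence of $R^h$ give
\[
\partial_3(R^hG^h)_{\cdot\alpha}=\partial_\alpha r_3^h+h\,\partial_\alpha(R^hG^h)_{\cdot 3},\quad\alpha\in\{1,2\},
\]
where $r_3^h$ denotes the third column of $R^h$. Using the strong convergence $R^h\to R$, Allaire's compactness applied to the $H^1$-bounded sequence $r_3^h$ (giving $\partial_\alpha r_3^h\wtto\partial_\alpha r_3+\partial_{y_\alpha}\widehat r_3$), and the scaling rule $h\,\partial_\alpha\to\gamma\,\partial_{y_\alpha}$ (for $\gamma$ finite), then integrating in $x_3$, multiplying by $r_\beta^T$, symmetrizing in $(\alpha,\beta)$, and using $r_\beta\cdot\partial_\alpha r_3=\secf_{\alpha\beta}$, produces the $x_3\secf_{\alpha\beta}(x')$ contribution in $(\sym G)_{\alpha\beta}$ together with remaining terms that are $y$-derivatives of auxiliary fields plus the $x_3$-average $(\sym\bar G)_{\alpha\beta}$, where $\bar G:=\fint_I G\,dx_3$.

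\textbf{Step 3 (bundling correctors; case $\gamma=\infty$).} All $y$-derivative terms from Step~2, together with the $y$-dependence of $\bar G$ (which is constrained by the in-plane commutator $\partial_\alpha\partial_\beta u^h=\partial_\beta\partial_\alpha u^h$), must be packaged into a single $\phi\in L^2(S,\mathring H^1(I\times\mathcal Y,\R^3))$, with the remaining $y$-independent part forming the symmetric $B(x')$. In the case $\gamma=\infty$, the scaling $h\,\partial_\alpha\to\infty\cdot\partial_{y_\alpha}$ forces $(RG)_{\cdot 3}$ to be $y$-independent in the limit, so the third column of $G$ becomes a function $d(x)\in L^2(\Omega,\R^3)$, $\phi$ loses its $x_3$-dependence, and \eqref{P:strain:1b} is obtained.

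\textbf{Main obstacle.} Step~3 is the crux: the required two-scale Korn-type decomposition for scaled symmetric gradients must realize all correctors simultaneously as a single $H^1$-regular $\phi$ and force $B$ to be $y$-independent, while keeping track of how $h/\eh\to\gamma$ controls the balance between $y$- and $x_3$-derivatives in $\sym(\nabla_y\phi,\tfrac{1}{\gamma}\partial_3\phi)$. The case $\gamma=\infty$ needs separate treatment precisely because the periodic scale $\eh$ is then asymptotically much finer than the thickness $h$.
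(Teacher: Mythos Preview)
Your proposal is a plausible outline, but Step~3 --- which you yourself flag as the ``main obstacle'' --- is not carried out, and it is precisely there that the real content lies. The paper's proof does \emph{not} rely on an abstract ``two-scale Korn-type decomposition''; instead it proceeds via an explicit additive splitting
\[
u^h(x',x_3)=\bar u^h(x')+hx_3\,\t R^h(x')e_3+h\,z^h(x',x_3),
\]
and then identifies, term by term, the two-scale limits of the four pieces appearing in the resulting expression for $(\nabla_hu^h-R^h)/h$. The bending term $x_3\secf$ comes from the piece $x_3\nabla'\t R^he_3$; the oscillatory corrector comes from $\nabla_hz^h$ and from the ``renormalized'' two-scale limits of $\nabla'\bar u^h/h$, $R^h/h$, and $\t R^he_3/h$. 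The latter are not standard two-scale limits (the sequences are unbounded in $L^2$); the paper handles them via two tailor-made lemmas (Lemma~\ref{L:two-scale-h1} for $H^1$-sequences and Lemma~\ref{L:two-scale-pc} for sequences that are piecewise constant on the $\eh$-lattice). Your heuristic ``scaling rule $h\,\partial_\alpha\to\gamma\,\partial_{y_\alpha}$'' is exactly the kind of statement these lemmas make precise, and without them your Step~2 computation cannot be completed rigorously.

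There is also a concrete obstruction to your Step~1 as written. You mollify to obtain a single smooth rotation field $R^h$. The paper instead works with \emph{two} approximating fields from Lemma~\ref{t6f2}: a piecewise constant $R^h:S\to\SO3$ (constant on cubes of the $\eh$-lattice, not the $h$-lattice) and an $H^1$ field $\t R^h$. The piecewise constant field is what makes Lemma~\ref{L:two-scale-pc} applicable, and the paper explicitly remarks (see Remark~(iii) after the proposition) that relying solely on a smooth $\SO3$-valued approximation, as in the one-dimensional precedent, would in the present two-dimensional setting force a restriction to small limiting energy. So your choice of a mollified $R^h$ is not innocuous: either you will be unable to identify the oscillatory part of $(R^h)'/h$, or you will need an additional smallness hypothesis that the proposition does not assume.

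Finally, a minor inaccuracy: in the case $\gamma=\infty$ the field $\phi$ does \emph{not} lose its $x_3$-dependence. It lies in $L^2(\Omega,\mathring H^1(\mathcal Y,\R^3))$, hence depends on $(x',x_3,y)$; what changes is that $\partial_3\phi$ no longer enters the formula and is replaced by an independent field $d(x)$.
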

\begin{remarks}
  \begin{enumerate}[(i)]
  \item In \cite{FJM1} a coarser characterization of possible weak limits of $\{E^h\}_{h>0}$ was obtained: In the
    situation of the previous proposition let $(E^h)'$ denote the $2\times 2$
    matrix obtained from $E^h$ by deleting the third row and column. Then it was shown in \cite{FJM1} that
    \begin{equation*}
      (E^h)'\wto x_3\secf(x')+B'(x')\qquad\text{weakly in
      }L^2(\Omega,\R^{2\times 2}_{\sym})).
    \end{equation*}
    Proposition~\ref{P:strain} refines this by capturing, in addition, oscillations on the scale $\e$.
  \item Proposition~\ref{P:strain} still only yields an incomplete characterization of the possible
    structure of the two-scale limiting strain $E$: it is not true that every $E$ in the form of
    \eqref{P:strain:1a} (resp. \eqref{P:strain:1b}) can be recovered as a two-scale limit of a
    sequence of nonlinear strains. For instance, when $u$ is affine, i.~e. $\secf=0$, then
    not every two-scale limiting strain of
    the form \eqref{P:strain:1a} with $B$ arbitrary and $\phi=0$ can
    emerge. 

    In our construction of recovery sequences a special role is played
    by the matrix $B$, which is ``recovered'' by corrections
    of the isometry of order $h$. More precisely, these corrections are obtained by
    solving the  equation
    \begin{equation}\label{eq:14}
      B=\sym\nabla'g+\alpha\secf\qquad\text{for }g:S\to\R^d\text{ and }\alpha:S\to\R.
    \end{equation}
    As shown in \cite[Lemma 3.3]{Schmidt-07}, equation \eqref{eq:14} can be solved locally on regions where  $\secf\neq 0$, provided that $u$ is smooth.
On the level of these ``order $h$ corrections'' the deformed
    plate behaves like a shell and the condition $\secf\neq 0$ corresponds
    to the property that the shell is developable without affine region.

    An important observation is that, in spite of not giving an exhaustive characterization of limiting strains,
    the result of Proposition~\ref{P:strain} is just sharp
    enough to obtain the optimal lower bound for $h^{-2}\mathcal E^{h,\eh}$. This is because
    on regions where $\secf=0$, corrections associated to $B$  can be ignored, since they do not
    reduce the energy (as $Q(y,F)$ is minimal for $F=0$).

    In contrast, for rods and von-K\'arm\'an
    plates, exhaustive characterizations were obtained in \cite[Theorem~3.5]{Neukamm-12} and \cite[Proposition~3.3]{NeuVel}.

\item A key technical ingredient in the proof of Proposition \ref{P:strain} is Lemma \ref{L:two-scale-pc} below.
It allows us to work with piecewise constant $SO(3)$-valued approximations of the deformation gradient,
as opposed to smooth $SO(3)$-valued approximations. The latter were used in the proof of the 1d case given in
\cite{Neukamm-12}. In the 2d case studied here, the use of such a smooth $SO(3)$-valued approximation
would require {\em small} limiting energy, cf. \cite[Remark 5]{FJM2}. Thanks to Lemma \ref{L:two-scale-pc},
our result is not restricted to small limiting energy. Incidentally,
the use of this lemma also simplifies the proof of the convergence
statement in the 1d case.
\end{enumerate}
\end{remarks}

The starting point of the proof of the previous Proposition is  \cite[Theorem 6]{FJM2},
which we combine with the last remark in \cite[Section 3]{FJM1} in order to allow for $\gamma_0 < 1$.

\begin{lemma}
\label{t6f2}
Let $\gamma_0\in (0, 1]$ and let $h,\delta>0$ with $\gamma_0\leq \frac{h}{\delta}\leq\frac{1}{\gamma_0}$. There exists a constant $C$, depending only
on $S$ and $\gamma_0$, such that the following is true:
if $u\in H^1(\Omega, \R^3)$ then there exists a map $R : S\to SO(3)$
which is piecewise constant on each cube $x + \delta Y$ with $x\in \delta\Z^2$ and there exists
$\t{R}\in H^1(S, \R^{3\times 3})$ such that
$$
\|\nabla_hu - R\|^2_{L^2(\Omega)} + \|R - \t{R}\|^2_{L^2(S)} + h^2\|\nabla\t{R}\|^2_{L^2(S)}
\leq C \|\dist(\nabla_h u, \SO 3) \|^2_{L^2(\Omega)}.
$$
\end{lemma}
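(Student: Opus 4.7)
The plan is to apply the Friesecke--James--M\"uller quantitative rigidity estimate on each cell of a $\delta$-periodic lattice of $S$, and then to patch the local rotations into the required piecewise constant $R$ and Sobolev $\t R$.

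I first unscale back to physical coordinates: setting $v(z):=u(z',z_3/h)$ on $S\times hI$ gives $\nabla v(z)=(\nabla_h u)(z',z_3/h)$. I tile $\R^2$ with the lattice $\delta\Z^2$ and, for every $x\in\delta\Z^2$ meeting $S$, form the box $C_x:=((x+\delta Y)\cap S)\times hI$. The assumption $\gamma_0\le h/\delta\le 1/\gamma_0$ makes each $C_x$ a box of aspect ratio bounded in terms of $\gamma_0$, so the rigidity estimate applies on $C_x$ with a constant depending only on $\gamma_0$ (and, for the cubes meeting $\partial S$, on a fixed Lipschitz atlas of $\partial S$), producing $R_x\in\SO 3$ with $\int_{C_x}|\nabla v-R_x|^2\le C_{\gamma_0}\int_{C_x}\dist^2(\nabla v,\SO 3)$. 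Summing over $x$ and undoing the rescaling $z_3=hx_3$ gives the desired bound on $\|\nabla_h u-R\|_{L^2(\Omega)}^2$ for the piecewise constant map $R(x'):=R_x$ on $(x+\delta Y)\cap S$.

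For the Sobolev approximation, I apply rigidity to the union $C_x\cup C_{x'}$ of any two face-adjacent cells (still of bounded aspect ratio) and combine the result with the local bounds via the triangle inequality to obtain the discrete-gradient estimate
\begin{equation*}
  |R_x-R_{x'}|^2\le\frac{C}{\delta^2 h}\int_{C_x\cup C_{x'}}\dist^2(\nabla v,\SO 3).
\end{equation*}
I then define $\t R:S\to\SO 3$ by convolving $R$ with a smooth mollifier of scale $\delta$ and projecting the result pointwise onto $\SO 3$. Standard finite-difference arguments convert the previous display, together with the identity $\int_{S\times hI}=h\int_\Omega$, into $\|R-\t R\|_{L^2(S)}^2\le C\|\dist(\nabla_h u,\SO 3)\|_{L^2(\Omega)}^2$ and $\|\nabla\t R\|_{L^2(S)}^2\le\frac{C}{\delta^2}\|\dist(\nabla_h u,\SO 3)\|_{L^2(\Omega)}^2$; multiplying the latter by $h^2$ and using $h^2/\delta^2\le\gamma_0^{-2}$ yields the last term in the estimate.

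The main obstacle is keeping the rigidity constant uniform over the full range $\gamma_0\le h/\delta\le 1/\gamma_0$: for $h\le\delta$ the cells are thin plates, which is exactly the setting of \cite[Theorem~6]{FJM2}, while for $h\ge\delta$ they are elongated in the thickness direction, the case addressed by the final remark of \cite[Section~3]{FJM1}. Combining the two regimes yields the full range. Two subordinate technicalities are (i) boundary cells $C_x$ which only partially intersect $S$, for which uniform rigidity follows from the piecewise-$C^1$ Lipschitz structure of $\partial S$ via a finite atlas, and (ii) ensuring that the pointwise projection onto $\SO 3$ in the definition of $\t R$ is globally well defined: this is handled by modifying $\t R$ on the small set where the mollified $R$ leaves a neighbourhood of $\SO 3$ and absorbing the modification into the right-hand side via the discrete-gradient estimate.
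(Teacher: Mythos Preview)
Your sketch is essentially the argument behind \cite[Theorem~6]{FJM2}, which is exactly what the paper invokes: it does not prove the lemma at all but simply cites that theorem together with the last remark in \cite[Section~3]{FJM1} to cover the full range $\gamma_0\le h/\delta\le 1/\gamma_0$. So your approach and the paper's are the same, except that you spell out the construction while the paper outsources it.

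One unnecessary complication in your write-up: the statement only asks for $\t R\in H^1(S,\R^{3\times 3})$, not for $\t R$ to take values in $\SO 3$. Hence the pointwise projection onto $\SO 3$ after mollification---and the attendant worry in your point (ii) about where this projection fails to be well defined---can simply be dropped; the mollified $R$ itself already serves as $\t R$. This also avoids the slightly circular ``modify $\t R$ on a bad set and absorb the error'' step.
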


Let us recall some well-known properties of two-scale convergence. We
refer to \cite{Allaire-92, Visintin-06, Mielke-Timofte-07} for proofs
in the standard two-scale setting and to \cite{Neukamm-10} for the
easy adaption to the notion of two-scale convergence considered here.
\begin{lemma}\label{kompts}
  \begin{enumerate}[(i)]
  \item Any sequence that is bounded in $L^2(\Omega)$ admits a two-scale
    convergent subsequence.
  \item Let $\t f\in L^2(\Omega\times Y)$ and let $f^h\in L^2(\Omega)$ be such that $f^h\wtto \t{f}$.
   Then $f^h\wto\int_Y\t f(\cdot,y)\,dy$ weakly in $L^2(\Omega)$.
  \item Let $f^0$ and $f^h\in L^2(\Omega)$ be such that $f^h\wto
    f^0$ weakly in
    $L^2(\Omega)$. Then (after passing to subsequences) we have $f^h\wtto f^0(x)+\t f$ for some $\t f\in L^2(\Omega\times
    Y)$ with $\int_Y\t f(\cdot,y)\,dy=0$ almost everywhere in $S$.
  \item Let $f^0$ and $f^h\in H^1(\Omega)$ be such that $f^h\to f^0$ strongly in
    $L^2(\Omega)$. Then $f^h\stto f^0$, where we extend $f^0$ trivially to $\Omega\times Y$.
  \item Let $f^0$ and $f^h\in H^1(S)$ be such that $f^h\wto f^0$ weakly in $H^1(S)$.
    Then (after passing to subsequences)
    \begin{equation*}
      \nabla' f^h\wtto \nabla 'f^0+\nabla_y\phi
    \end{equation*}
    for some $\phi\in L^2(S, H^1(\mathcal Y))$.
  \end{enumerate}
\end{lemma}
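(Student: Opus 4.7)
The plan is to prove the five parts in order, using (i) as the main compactness engine for (iii)--(v) and closing (v) with a de Rham-type argument on the torus $\mathcal Y$.

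For (i) I would first verify the elementary identity $\|\psi(\cdot,\cdot/\eh)\|_{L^2(\Omega)}^2\to\|\psi\|_{L^2(\Omega\times Y)}^2$ for $\psi\in C_0^\infty(\Omega,C(\mathcal Y))$, via a Riemann-sum estimate on slices $S\times\{x_3\}$ that exploits the $Y$-periodicity of $\psi$ in $y$ (the $x_3$ direction is inert, since oscillations are purely in-plane). Combined with the $L^2(\Omega)$ bound on $\{f^h\}$ and a Cantor diagonal extraction over a countable dense family $\{\psi_k\}\subset C_0^\infty(\Omega,C(\mathcal Y))$, this produces a subsequence along which $\int_\Omega f^h\psi_k(x,x'/\eh)\,dx$ converges for every $k$; the limit extends by density to a bounded linear functional on $L^2(\Omega\times Y)$ and is represented by some $f\in L^2(\Omega\times Y)$.

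Part (ii) is immediate by inserting $\psi(x,y)=\eta(x)$ into Definition~\ref{D:two-scale-thin} and applying Fubini. Part (iii) combines (i), which produces $f^h\wtto g$ along a subsequence, with (ii), which forces $\int_Y g(\cdot,y)\,dy=f^0$; set $\t f:=g-f^0$. For (iv), strong $L^2$ convergence gives in particular weak $L^2$ convergence, so (iii) yields $f^h\wtto f^0+\t f$ along a subsequence, with $\int_Y\t f\,dy=0$. The Pythagorean identity
\[
\|f^0+\t f\|_{L^2(\Omega\times Y)}^2=\|f^0\|_{L^2(\Omega)}^2+\|\t f\|_{L^2(\Omega\times Y)}^2
\]
(the cross term vanishes because $\t f$ has zero $y$-average) together with lower semicontinuity of the $L^2(\Omega\times Y)$-norm under weak two-scale convergence and the norm convergence $\|f^h\|_{L^2(\Omega)}\to\|f^0\|_{L^2(\Omega)}$ forces $\t f=0$, and the matching of norms then upgrades weak two-scale to strong two-scale.

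The substantive step is (v). Boundedness of $\nabla'f^h$ in $L^2(S)$ and (i) furnish a subsequence with $\nabla'f^h\wtto G$, and by (ii) the $y$-average of $G$ equals $\nabla'f^0$, so $G=\nabla'f^0+\t G$ with $\int_Y\t G\,dy=0$. To realize $\t G$ as a $y$-gradient I would test against $\psi\in C_0^\infty(S,C^\infty(\mathcal Y;\R^2))$ satisfying $\operatorname{div}_y\psi=0$, integrate by parts in $x'$, and use the key cancellation $\nabla'\cdot[\psi(x',x'/\eh)]=(\operatorname{div}_{x'}\psi)(x',x'/\eh)$ (the dangerous $\eh^{-1}$ contribution drops out by the choice of $\psi$); passing to the limit on both sides — using Rellich for the strong $L^2$ convergence of $f^h$ — yields $\iint_{S\times Y}\t G\cdot\psi\,dy\,dx'=0$ for every such $\psi$. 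The main obstacle is the last step: deducing from this orthogonality that $\t G(x',\cdot)=\nabla_y\phi(x',\cdot)$ for a.e.\ $x'\in S$, with $\phi$ depending measurably on $x'$ and lying in $L^2(S,H^1(\mathcal Y))$. This is the classical fibrewise de Rham/Helmholtz decomposition on the torus: the projection onto the closed subspace of mean-zero $y$-gradients in $L^2(\mathcal Y;\R^2)$ is a bounded linear operator independent of $x'$, so fibrewise application preserves measurability, and the Poincar\'e inequality on $\mathcal Y$ delivers the required $L^2(S,H^1(\mathcal Y))$ regularity of $\phi$ after fixing a mean-zero normalization.
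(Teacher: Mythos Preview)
The paper does not give its own proof of this lemma; it simply cites \cite{Allaire-92, Visintin-06, Mielke-Timofte-07} for the standard two-scale setting and \cite{Neukamm-10} for the adaptation to the present variant. Your sketch is therefore more detailed than anything in the paper, and it follows exactly the classical route taken in those references: separability plus diagonal extraction for (i), reading off the $y$-average for (ii)--(iii), the norm/lower-semicontinuity trick for (iv), and the ``test against $\operatorname{div}_y$-free fields, then Helmholtz on $\mathcal Y$'' argument for (v).

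Two small points worth tightening. In (iv) you pass to a subsequence via (iii), but the statement asserts convergence of the full sequence; you should add the usual one-line remark that since the limit is uniquely determined (every subsequence has a further subsequence with the same strong two-scale limit $f^0$), the whole sequence converges. Also in (iv) your argument never uses the $H^1$ hypothesis on $f^h$ --- which is fine, since strong $L^2$ convergence alone suffices; the $H^1$ assumption in the paper's statement is not actually needed for the conclusion. In (v) your de Rham step is correct and can be made completely explicit via Fourier series on $\mathcal Y$: orthogonality to all $y$-solenoidal $\psi$ forces each nonzero Fourier mode $\hat G_k(x')$ to be parallel to $k$, which is precisely the gradient condition, and measurability in $x'$ is automatic because the potential is recovered by a bounded linear map (the inverse Laplacian on mean-zero functions) applied fibrewise.
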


The following lemma is an immediate consequence of Lemma \ref{kompts}, cf.
\cite[Theorem~6.3.3]{Neukamm-10} for a proof.

\begin{lemma}\label{L:two-scale}
  Let $u^0$ and $u^h\in H^1(\Omega,\R^3)$ be such that $u^h\wto u^0$ weakly in
  $H^1(\Omega,\R^3)$.
  \begin{enumerate}[(a)]
  \item If $\gamma\in(0,\infty)$ then there exists
$\phi\in L^2(S,\mathring H^1(I\times\mathcal Y,\R^3))$ such that
(after passing to subsequences)
    \begin{equation*}
      \nabla_h u^h\wtto (\nabla'u^0\,,\,0)+(\nabla_y\phi\,,\,\tfrac{1}{\gamma}\partial_3\phi).
    \end{equation*}
  \item If $\gamma=\infty$ then there exist
 $\phi\in L^2(S,\mathring H^1(I\times\mathcal Y,\R^3))$ and
    $d\in L^2(\Omega,\R^3)$ such that (after passing to subsequences)
    \begin{equation*}
      \nabla_h u^h\wtto (\nabla'u^0\,,\,0)+(\nabla_y\phi\,,\,d).
    \end{equation*}
      \end{enumerate}
\end{lemma}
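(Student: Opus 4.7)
The plan is to carry out a standard two-scale dimension-reduction argument in three steps: extract two-scale limits, derive compatibility conditions from commuting partial derivatives of $u^h$, and reconstruct $\phi$ as a potential. For $\nabla_h u^h$ to two-scale converge one implicitly needs it bounded in $L^2$; combined with $u^h \wto u^0$ in $H^1(\Omega)$ this forces $\partial_3 u^h \to 0$ strongly in $L^2$, so $u^0$ is independent of $x_3$. Using Lemma \ref{kompts}(i)--(iii) columnwise, I would then extract (after a subsequence) $\partial_\alpha u^h \wtto \partial_\alpha u^0 + G_\alpha$ with $\int_Y G_\alpha\,dy = 0$ for $\alpha = 1,2$, and $\tfrac{1}{h}\partial_3 u^h \wto d$ weakly in $L^2$ with $\tfrac{1}{h}\partial_3 u^h \wtto d + \tilde d$ where $\int_Y \tilde d\,dy = 0$.

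The central step constrains $(G,\tilde d)$ via commutation of second derivatives of $u^h$. Testing the identities $\partial_\alpha\partial_\beta u^h = \partial_\beta\partial_\alpha u^h$ and $\partial_3\partial_\alpha u^h = \partial_\alpha\partial_3 u^h$ against the rescaled periodic test functions $\psi^h(x) = \eh\,\eta(x)\,\chi(x'/\eh)\,v$ with $\eta \in C^\infty_c(\Omega)$, $\chi \in C^\infty(\mathcal Y)$, $v \in \R^3$, and passing to the two-scale limit, yields in $\mathcal D'(\Omega \times Y)$ the $y$-curl relation $\partial_{y_1} G_2 = \partial_{y_2} G_1$ together with the mixed compatibility
\begin{equation*}
  \tfrac{1}{\gamma}\,\partial_3 G_\alpha \;=\; \partial_{y_\alpha}\tilde d,
\end{equation*}
the factor $1/\gamma$ emerging from $\eh/h \to 1/\gamma$. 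For $\gamma = \infty$ the second relation degenerates to $\partial_{y_\alpha}\tilde d = 0$, which combined with the zero $y$-mean condition gives $\tilde d = 0$.

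Finally I would construct $\phi$. In case (a), the $y$-curl relation and the zero-mean condition on $G_\alpha$ permit solving $\nabla_y\phi_0 = G$ via Fourier series in $y$, yielding $\phi_0 \in L^2(\Omega, \mathring H^1(\mathcal Y, \R^3))$; the mixed compatibility then forces $\tfrac{1}{\gamma}\partial_3\phi_0 = \tilde d$ (the discrepancy is $y$-independent with zero $y$-mean), which upgrades $\phi_0$ to $L^2(S, H^1(I \times \mathcal Y))$ via the identity $\partial_3 \hat\phi_0(\cdot,k) = \gamma\,\hat{\tilde d}(\cdot,k) \in L^2$ on Fourier modes $k \neq 0$. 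Setting $\phi := \phi_0 + \gamma\Phi$ with $\Phi(x',x_3) := \int_{-1/2}^{x_3} d(x',s)\,ds$ shifted to have zero $I$-mean produces $\phi \in L^2(S, \mathring H^1(I \times \mathcal Y, \R^3))$ satisfying $\nabla_y\phi = G$ and $\tfrac{1}{\gamma}\partial_3\phi = d + \tilde d$. Case (b) is immediate once $\tilde d = 0$: the $y$-curl argument produces $\phi \in L^2(\Omega, \mathring H^1(\mathcal Y, \R^3))$ with $\nabla_y\phi = G$, while the third column converges to $d$. The main obstacle is the mixed compatibility in Step 2, where the test-function scaling by $\eh$ must be tuned so that the in-plane oscillations $\partial_{y_\alpha}\chi$ and the out-of-plane factor $(\eh/h)\partial_3\eta$ both survive and balance in the limit, producing the crucial $1/\gamma$.
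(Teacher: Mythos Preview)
Your proposal is correct and follows the standard route for such two-scale dimension-reduction statements. The paper itself does not give a proof of this lemma: it merely states that it is an immediate consequence of Lemma~\ref{kompts} and refers to \cite[Theorem~6.3.3]{Neukamm-10} for details. Your argument---extracting two-scale limits columnwise, deriving the $y$-curl and mixed compatibility relations by testing the commutator identities $\partial_\alpha\partial_\beta u^h = \partial_\beta\partial_\alpha u^h$ and $\partial_3\partial_\alpha u^h = \partial_\alpha\partial_3 u^h$ against oscillating test functions, and reconstructing the potential $\phi$ via Fourier series in $y$---is precisely the kind of proof one would find in that reference. The scaling bookkeeping you identify (the factor $\eh/h \to 1/\gamma$ balancing the in-plane oscillatory derivative against the thin-direction derivative) is indeed the heart of the matter, and your treatment of the regularity upgrade $\partial_3\phi_0 \in L^2$ via the modewise identity $\partial_3\hat\phi_0(\cdot,k) = \gamma\,\hat{\tilde d}(\cdot,k)$ is the right way to close the argument.

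One minor remark: in case~(b) the paper states $\phi \in L^2(S,\mathring H^1(I\times\mathcal Y,\R^3))$, whereas your construction naturally yields $\phi \in L^2(\Omega,\mathring H^1(\mathcal Y,\R^3))$. Since only $\nabla_y\phi$ enters the limit formula for $\gamma=\infty$, and since the relaxation formula in Definition~\ref{D:relaxation-formula}(b) uses the latter space anyway, this discrepancy is inconsequential for all applications in the paper.
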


At several places in our proof we will need to make sense of a two-scale limit
for sequences which might be unbounded in $L^2$, but which nevertheless have controlled oscillations
on the scale $\e$. In order to capture these oscillations, we `renormalize' the sequence by
throwing away the (divergent) part which does not oscillate on the scale $\e$. (For bounded sequences,
this latter part gives rise to the weak limit, but the point here is that our sequences may be unbounded.)
Equivalently, we weaken the notion of two-scale convergence by restricting the admissible test functions
to functions with vanishing cell average.
\\

More precisely, for a sequence $\{f^h\}_{h>0}\subset L^2(\Omega)$ and $\t f\in
  L^2(\Omega\times Y)$ with $\int_Y\t f(\cdot,y)\,dy=0$ almost everywhere in $\Omega$, we write
  $$
  f^h \otto \t f
  $$
  if
\begin{multline}\label{eq:ren}
  \lim\limits_{h\to 0}\int_\Omega
  f^h(x)\varphi(x)g(\tfrac{x'}{\eh})\,dx=\iint_{\Omega\times Y} \t f(x,y)\varphi(x)g(y)\,dy\,dx\\
  \text{for all $\varphi\in C^\infty_0(\Omega)$ and $g\in C^{\infty}(\mathcal
    Y)$ with $\int_Yg\,dy=0$.}
\end{multline}

The proof of the following lemma is straightforward.

\begin{lemma}
  \label{L:osc}
  Let $f^0$ and $f^h\in L^2(\Omega)$ be such that $f^h\wto f^0$ weakly in
  $L^2(\Omega)$ and $f^h\otto \t f$. Then $f^h\wtto f^0+\t f$ weakly two-scale.
\end{lemma}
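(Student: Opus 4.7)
The plan is to reduce to the hypotheses via a clean decomposition of an arbitrary test function into its $y$-mean and its oscillatory part. Fix $\psi \in C^\infty_0(\Omega, C(\mathcal Y))$ and write $\psi(x,y) = \bar\psi(x) + \psi_0(x,y)$ where $\bar\psi(x) := \int_Y \psi(x,y)\,dy$ and $\psi_0$ has vanishing $y$-mean by construction. Since $\bar\psi \in L^2(\Omega)$, the weak convergence $f^h \wto f^0$ gives
$$\int_\Omega f^h\,\bar\psi\,dx \;\longrightarrow\; \int_\Omega f^0\,\bar\psi\,dx = \iint_{\Omega\times Y}f^0(x)\psi(x,y)\,dy\,dx.$$
Using $\int_Y \tilde f(x,y)\,dy = 0$, the target right-hand side in the definition of $f^h \wtto f^0 + \tilde f$ can be rewritten as $\int_\Omega f^0\bar\psi\,dx + \iint_{\Omega\times Y}\tilde f\,\psi_0\,dy\,dx$. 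Hence it suffices to establish
$$\int_\Omega f^h(x)\,\psi_0(x,\tfrac{x'}{\eh})\,dx \;\longrightarrow\; \iint_{\Omega\times Y}\tilde f(x,y)\,\psi_0(x,y)\,dy\,dx.$$

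By assumption, $f^h \otto \tilde f$ delivers exactly this convergence whenever $\psi_0$ is replaced by a tensor product $\varphi(x)g(y)$ with $\varphi \in C^\infty_0(\Omega)$, $g \in C^\infty(\mathcal Y)$ and $\int_Y g = 0$. The central technical step is therefore a density argument: I would approximate $\psi_0$, uniformly on $\overline{\Omega}\times \overline{Y}$ and with factors $\varphi_i$ supported in a fixed compact subset of $\Omega$, by finite sums $\Psi_N(x,y) = \sum_{i=1}^N \varphi_i(x)g_i(y)$ of such separated functions. A Stone--Weierstrass argument applied on $K \times \mathcal Y$ for some compact $K \supset \spt \psi$ supplies a uniform tensor-sum approximation of $\psi$, and subtracting the $y$-average from each $g_i$ — which preserves uniform closeness because $\psi_0$ itself already has zero $y$-mean — enforces the required vanishing-mean property.

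With this approximation in hand, the passage to the limit is routine. Weakly convergent sequences are bounded, so
$$\Big|\int_\Omega f^h\,\big(\psi_0-\Psi_N\big)(x,\tfrac{x'}{\eh})\,dx\Big| + \Big|\iint_{\Omega\times Y}\tilde f\,\big(\psi_0-\Psi_N\big)\,dy\,dx\Big| \leq C\,\|\psi_0-\Psi_N\|_{L^\infty}$$
uniformly in $h$, with $C$ depending only on $|\Omega|$, $\sup_h\|f^h\|_{L^2(\Omega)}$ and $\|\tilde f\|_{L^2(\Omega\times Y)}$. Applying $\otto$ termwise to $\Psi_N$ yields the desired limit for $\Psi_N$; letting $h \to 0$ and then $N \to \infty$ concludes the oscillatory part, which combines with the mean part above to give $f^h \wtto f^0+\tilde f$. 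There is no serious obstacle: the only non-bookkeeping ingredient is the density of zero-mean tensor sums in the space of continuous zero-$y$-mean functions on $K \times \mathcal Y$, and this is standard.
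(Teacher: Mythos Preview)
Your argument is correct: decomposing a general test function into its $y$-mean and a zero-mean remainder, then approximating the latter by finite tensor sums with zero-mean $y$-factors, is exactly the natural route, and the boundedness of $(f^h)$ coming from weak convergence is what makes the approximation step go through uniformly in $h$. The paper itself does not give a proof of this lemma, declaring it ``straightforward''; your write-up supplies precisely the details one would expect behind that word.
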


For the proof of Proposition~\ref{P:strain} we have to identify the
oscillatory part of two-scale limits for renormalized functions of the
form $\frac{1}{\eh}f^h$ where $f^h$ is either a sequence bounded in
$H^1(S)$ or piecewise affine with respect to  the lattice $\eh\Z^2$. The
following two lemmas treat these situations.

\begin{lemma}\label{L:two-scale-h1}
  Let $f^0$ and $f^h\in H^1(S)$ be such that $f^h\wto f^0$ weakly in
  $H^1(S)$ and assume that
  \begin{equation*}
    \nabla'f^h\wtto \nabla' f^0 + \nabla_y\phi
  \end{equation*}
  for some $\phi\in L^2(S,H^1(\mathcal Y))$ with
  $\int_Y\phi(\cdot,y)\,dy=0$ almost everywhere in $S$. Then
  \begin{equation*}
    \frac{f^h}{\eh}\otto \phi.
  \end{equation*}
\end{lemma}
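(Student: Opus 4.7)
The idea is to exploit the zero-mean hypothesis on $g$ in the definition of $\otto$ by writing $g$ as a divergence of a smooth periodic vector field. This absorbs the awkward factor $1/\eh$ and reduces the claim to the already available two-scale convergence of $\nabla' f^h$, combined with Rellich compactness of $\{f^h\}$.

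Concretely, given $g\in C^\infty(\mathcal Y)$ with $\int_Y g\,dy=0$, solve the Poisson equation $\Delta_y u=g$ on the torus (solvable by the mean-zero assumption, smooth by elliptic regularity) and set $\Psi:=\nabla_y u\in C^\infty(\mathcal Y,\R^2)$, so that $\operatorname{div}_y\Psi=g$. This yields the key scaling identity $g(x'/\eh)=\eh\,\operatorname{div}_{x'}\!\bigl(\Psi(x'/\eh)\bigr)$, so that for any $\varphi\in C^\infty_0(S)$ an integration by parts in $x'$ recasts the integral of interest as
\begin{equation*}
  \int_S \frac{f^h}{\eh}\,\varphi\,g\bigl(\tfrac{x'}{\eh}\bigr)\,dx'
  =-\int_S\bigl(\varphi\,\nabla' f^h+f^h\,\nabla'\varphi\bigr)\cdot\Psi\bigl(\tfrac{x'}{\eh}\bigr)\,dx'.
\end{equation*}

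The first term on the right is in classical two-scale form: the oscillating function $\varphi(x')\Psi(y)$ is an admissible test function, so the hypothesis $\nabla' f^h\wtto\nabla' f^0+\nabla_y\phi$ yields the limit $-\iint_{S\times Y}\varphi\,(\nabla' f^0+\nabla_y\phi)\cdot\Psi\,dy\,dx'$. The second term is handled by Rellich: $f^h\to f^0$ strongly in $L^2(S)$, while $\Psi(\cdot/\eh)\wto\overline\Psi:=\int_Y\Psi\,dy$ weakly in $L^2(S)$, so the limit equals $-\int_S f^0\,\nabla'\varphi\cdot\overline\Psi\,dx'$.

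Collecting the two contributions, the terms involving $\nabla' f^0$ and $f^0\,\nabla'\varphi$ combine to $-\int_S\nabla'(\varphi f^0)\cdot\overline\Psi\,dx'$, which vanishes because $\overline\Psi$ is a constant vector and $\varphi f^0$ has compact support in $S$. The surviving contribution is $-\iint_{S\times Y}\varphi\,\nabla_y\phi\cdot\Psi\,dy\,dx'$; a final integration by parts in $y$ (no boundary terms, by $Y$-periodicity) converts this into $\iint_{S\times Y}\varphi\,\phi\,\operatorname{div}_y\Psi\,dy\,dx'=\iint_{S\times Y}\varphi\,\phi\,g\,dy\,dx'$, which is exactly the defining identity \eqref{eq:ren} for $f^h/\eh\otto\phi$. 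The reduction from the $\varphi\in C^\infty_0(\Omega)$ prescribed in the definition of $\otto$ to the $\varphi\in C^\infty_0(S)$ used above is automatic, since $f^h$ and $\phi$ are independent of $x_3$: one simply replaces $\varphi$ by $\int_I\varphi(\cdot,x_3)\,dx_3$. The only substantive input is the torus-solvability of $\operatorname{div}_y\Psi=g$, which directly uses the zero-mean hypothesis and poses no real obstacle; indeed, the main point of the lemma is precisely the reformulation of the singular quotient $f^h/\eh$ via the potential $\Psi$.
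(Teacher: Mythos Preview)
Your proof is correct and follows essentially the same route as the paper's: solve the periodic Poisson equation $\Delta_y u=g$ (the paper writes $-\Delta_y G=g$), use the resulting potential to absorb the factor $1/\eh$, integrate by parts in $x'$, and pass to the limit via the assumed two-scale convergence of $\nabla' f^h$ together with the strong $L^2$-convergence $f^h\to f^0$. The only cosmetic difference is that you integrate by parts once against $\operatorname{div}_{x'}(\Psi(\cdot/\eh))$ and obtain two terms, whereas the paper first expands $\Delta(G^h\psi)$ via the product rule and obtains three terms (the extra one being $O(\eh)$); also, you could have noted directly that $\overline\Psi=\int_Y\nabla_y u\,dy=0$ by periodicity, which makes the cancellation of the $f^0$-terms immediate.
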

\begin{proof}
  Since $f^h$ is independent of $x_3$, we must show that
  \begin{equation}\label{eq:2}
    \frac{1}{\eh}\int_Sf^h(x')g(\frac{x'}{\eh})\psi(x')\,dx'\to\iint_{S\times
      Y}\phi(x',y)g(y)\psi(x')\,dx'
  \end{equation}
  for all $g\in C^{\infty}(\mathcal Y)$ with $\int_Yg\,dy=0$ and $\psi\in
  C^\infty_0(S)$. For simplicity we write $\e$ instead of $\eh$. Let $G$ denote the unique solution in $C^2(\mathcal
  Y)$ to
  \begin{equation*}
    -\triangle_y G=g,\qquad \int_Y G\,dy=0.
  \end{equation*}
  Set $G^h(x'):=\e
  G(\frac{x'}{\e})$ so that
  \begin{equation*}
    \triangle G^h(x')=\frac{1}{\e} g(\frac{x'}{\e}).
  \end{equation*}
  Hence, the right-hand side of \eqref{eq:2} equals
  \begin{eqnarray}
    \notag
    \lefteqn{\int_S f^h\triangle G^h\psi\,dx'}&&\\
    \label{eq:3}
    &=&\int_S f^h\Big(\,\triangle (G^h\psi)-2\nabla
    G^h\cdot\nabla\psi-G^h\triangle \psi\,\Big)\,dx'\\
    \notag
    &=&-\int_S \nabla f^h\cdot\nabla (G^h\psi)\,dx'
    -2\int_S f^h(\nabla G^h\cdot\nabla\psi)\,dx'
    -\int_S f^h\,G^h\triangle \psi\,dx'.
  \end{eqnarray}
  By the chain rule and the definition of $G^h$ we have
  \begin{eqnarray*}
    \nabla (G^h\psi)(x')&=&\nabla
    G^h(x')\psi(x')+G^h(x')\psi(x')\\
    &=&\nabla_yG(\tfrac{x'}{\e})\psi(x')+\e G(\tfrac{x'}{\e})\psi(x').
  \end{eqnarray*}
  Since the right-hand side strongly two-scale converges to
  $\nabla_yG(y)\psi(x)$, and because $\nabla f^h\wtto \nabla
  f^0(x)+\nabla_y\phi(x,y)$ by assumption, we deduce that
  \begin{eqnarray*}
    \lefteqn{-\int_S \nabla f^h\cdot\nabla (G^h\psi)\,dx'}&&\\
    &\to& -\iint_{S\times
      Y}\Big(\,\nabla
    f^0(x')+\nabla_y\phi(x',y)\,\Big)\cdot\Big(\,\nabla_yG(y)\psi(x')\,\Big)\,dy\,dx'\\
    &&=\iint_{S\times Y}\phi(x',y)\triangle_yG(y)\psi(x')\,dy\,dx'\\
    &&=\iint_{S\times Y}\phi(x',y)g(y)\psi(x')\,dy\,dx'.
  \end{eqnarray*}
  Hence it suffices to show that the second and third
  integral on the right-hand side of \eqref{eq:3} vanish for $h\to 0$.
  We treat the second integral. Since $\nabla G^h(x')=\nabla_y
  G(\tfrac{x'}{\e})$ strongly two-scale converges to
  $\nabla_y G(y)$, and because $f^h\nabla\psi\to f^0\nabla\psi$ strongly in
  $L^2(S)$, we deduce that
  \begin{eqnarray*}
    -2\int_S f^h(\nabla G^h\cdot\nabla\psi)\,dx'\to -\iint_{S\times
      Y}f^0(x')\nabla\psi(x')\cdot\nabla_y G(y)\,dy\,dx'=0.
  \end{eqnarray*}
  The third integral on the right-hand side of \eqref{eq:3} vanishes
  simply because $f^h\triangle\psi$ is bounded in $L^2(S)$ and $G^h\to
  0$ in $L^2(S)$.
\end{proof}

\begin{lemma}
  \label{L:two-scale-pc}
  Let $f^0$ and $f^h\in L^\infty(S)$ be such that $f^h\stackrel{*}{\wto} f^0$
  weakly-* in $L^{\infty}(S)$. Assume that $f^h\in L^{\infty}(S)$ is constant on each cube $x +\eh Y,\ \
  x\in\eh\Z^2$. Then we have
  \begin{equation}\label{eq:4}
    \frac{1}{\eh} \int_S f^h(x')\psi(x') g\left( \frac{x'}{\eh}
    \right)\ dx' \to \int_S f^0(x') \nabla' \psi(x')\ dx'\cdot \int_Y g(y) y\ dy
  \end{equation}
  for all $g\in C(\mathcal Y)$ with $\int_Y g=0$ and $\psi\in C^\infty_0(S)$.
  In particular, if $f^0\in W^{1,2}(S)$ we have
  $$
\frac{1}{\eh}f^h\otto - (y\cdot\nabla') f^0.
$$
Here we write
$$
(y\cdot\nabla')f^0(x') = \sum_{\alpha=1,2}y_\alpha\partial_\alpha f^0(x').
$$
\end{lemma}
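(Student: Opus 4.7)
The plan is to exploit the piecewise constant structure of $f^h$ to reduce the integral in \eqref{eq:4} to a sum of contributions over the $\eh$-cubes of the lattice, perform a cube-by-cube change of variables, and then use a Taylor expansion of $\psi$ combined with the zero average $\int_Y g = 0$ to extract the gradient $\nabla'\psi$. Let $\Lambda^h := \{z \in \eh\Z^2 \colon z + \eh Y \subset S\}$ and $S^h := \bigcup_{z\in\Lambda^h}(z+\eh Y)$. Since $\psi$ has compact support in $S$, we have $\spt\psi \subset S^h$ for all sufficiently small $h$, so the integral in the left-hand side of \eqref{eq:4} may be restricted to $S^h$ and decomposed as a sum over $z\in\Lambda^h$.

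On each cube $Q^h_z := z+\eh Y$ the function $f^h$ equals the constant $f^h(z)$, and the change of variables $x' = z + \eh y$ combined with the $Y$-periodicity of $g$ (noting $z/\eh \in \Z^2$) gives
\begin{equation*}
\frac{1}{\eh}\int_{Q^h_z} f^h(x')\psi(x')g(x'/\eh)\,dx' = f^h(z)\,\eh\int_Y \psi(z+\eh y)\,g(y)\,dy.
\end{equation*}
The Taylor expansion $\psi(z+\eh y) = \psi(z) + \eh\, y\cdot\nabla'\psi(z) + O(\eh^2\|\psi\|_{C^2})$ together with $\int_Y g = 0$ kills the $\psi(z)$-term and yields
\begin{equation*}
\frac{1}{\eh}\int_{Q^h_z} f^h\psi\, g(\cdot/\eh)\,dx' = f^h(z)\,\eh^2\,\nabla'\psi(z)\cdot\int_Y y\,g(y)\,dy + O(\eh^3\|f^h\|_\infty\|\psi\|_{C^2}\|g\|_\infty).
\end{equation*}
Since there are $O(\eh^{-2})$ relevant cubes and $\|f^h\|_\infty$ is bounded (weak-$*$ convergent sequences in $L^\infty$ are bounded), summing the error term gives an overall contribution of order $O(\eh)$, which vanishes in the limit.

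Using $f^h(z)\,\eh^2 = \int_{Q^h_z} f^h\,dx'$, the leading term sums to $\bigl(\int_Y y g\,dy\bigr)\cdot\int_{S^h} f^h(x')\,\widetilde{\nabla\psi}(x')\,dx'$, where $\widetilde{\nabla\psi}$ is the piecewise constant approximation of $\nabla'\psi$ on the $\eh$-lattice. Since $\nabla'\psi$ is uniformly continuous, $\widetilde{\nabla\psi}\to \nabla'\psi$ uniformly, and the $L^\infty$ boundedness of $f^h$ combined with the weak-$*$ convergence $f^h\stackrel{*}\wto f^0$ (tested against $\nabla'\psi\in L^1(S)$) gives $\int_S f^h\widetilde{\nabla\psi}\,dx'\to \int_S f^0\nabla'\psi\,dx'$, proving \eqref{eq:4}. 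For the final $\otto$ statement, when $f^0\in W^{1,2}(S)$ an integration by parts (justified by the compact support of $\psi$) converts $\int_S f^0\nabla'\psi\,dx'\cdot\int_Y yg\,dy$ into $-\int_S\psi(x')\int_Y g(y)(y\cdot\nabla')f^0(x')\,dy\,dx'$, which is exactly the definition of $\frac{1}{\eh}f^h\otto -(y\cdot\nabla')f^0$. There is no real obstacle; the only point requiring care is handling the boundary cubes, which is straightforward thanks to the compact support of $\psi$.
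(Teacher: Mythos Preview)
Your proof is correct and follows essentially the same approach as the paper's: both decompose the integral over the $\eh$-lattice, use the zero average of $g$ to eliminate the zeroth-order term in a Taylor expansion of $\psi$, control the second-order remainder via the Lipschitz continuity of $\nabla'\psi$, and pass to the limit using the weak-$*$ convergence of $f^h$ against a piecewise constant approximation of $\nabla'\psi$. Your presentation is in fact slightly more streamlined than the paper's, which reaches the same leading term through the intermediate identity $\int_S \tfrac{f^h}{\eh}\psi\, g(\cdot/\eh) = \int_S f^h \tfrac{\psi-\widetilde\psi^h}{\eh} g(\cdot/\eh)$ before expanding.
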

\begin{proof}
  We first argue that \eqref{eq:4} combined with $f^0\in
  W^{1,\infty}(S)$ implies the convergence of $\frac{1}{\eh}f^h$.
  Indeed, since $f^h$ is independent of $x_3$ it suffices to consider
  test functions $g$ and $\psi$ as in identity \eqref{eq:4}. Now the statement simply follows from the observation that the right-hand side of \eqref{eq:4} becomes
  \begin{eqnarray*}
    -\iint_{S\times Y}(y\cdot\nabla')f^0(x')\psi(x')g(y)\ dy\ dx'
  \end{eqnarray*}
  by an integration by parts. We prove \eqref{eq:4}. For simplicity we
  write $\e$ instead of $\eh$. We denote by $\t{\psi}^h$ an approximation
  of $\psi$ that is constant on each of the cubes $\xi+\e Y$,
  $\xi\in\e\Z^2$, say $\t\psi^h(x):=\psi(\xi_x)$ where $\xi_x\in\e\Z^2$
  denotes the cube $\xi_x+\e Y$ in which $x$ lies.
  Then we have
  \begin{align}\label{renorm-1}
    \int_S \frac{f^h(x)}{\e}\psi(x) g\left( \frac{x}{\e} \right)\ dx
    &= \int_S f^h(x) \frac{\psi(x) - \t{\psi}^h(x)}{\e} g\left( \frac{x}{\e} \right)\ dx
  \end{align}
  because
  $$
  \int_S f^h(x)\t{\psi}(x) g\left( \frac{x}{\e} \right)\ dx = 0,
  $$
  since $g$ has zero average over $Y$, and $f^h$ and $\t{\psi}^h$ are both piecewise constant.
  Let us compute the right-hand side of \eqref{renorm-1}. As $\xi_x\in
  \e\Z^2$ and $g\in C(\mathcal Y)$, we have
  $$
  g\left( \frac{x - \xi_x}{\e} \right) =  g\left( \frac{x}{\e} \right),
  $$
  and see (after extending $f^h$ to $\R^2$ by zero)
  \begin{eqnarray*}
    \lefteqn{\int_S f^h(x) \frac{\psi(x) - \t{\psi}^h(x)}{\e} g\left(
        \frac{x}{\e} \right)\ dx}&&\\
    &=& \sum_{\xi\in\e\Z^d} f^h(\xi) \int_{\xi + \e Y} \frac{\psi(x) - \t{\psi}^h(\xi)}{\e} g\left( \frac{x}{\e} \right)\ dx
    \\
    &=& \sum_{\xi\in\e\Z^d} f^h(\xi) \int_{\xi + \e Y} \left(\int_0^1 \left( \nabla'\psi(\xi + t(x - \xi))\right)\cdot \frac{x - \xi}{\e}\right) g\left( \frac{x-\xi}{\e} \right)\ dx
    \\
    &=& \sum_{\xi\in\e\Z^d} f^h(\xi) \int_{\e Y} \left(\int_0^1 \left(
        \nabla'\psi(\xi + t x)\right)\cdot \frac{x}{\e}\right) g\left( \frac{x}{\e} \right)\ dx
    \\
    &=& \sum_{\xi\in\e\Z^d} f^h(\xi)\e^2 \int_Y \left( \int_0^1 \nabla'\psi(\xi + t\e y)\cdot y \right) g(y)\ dy
    \\
    &=& \sum_{\xi\in\e\Z^d} f^h(\xi)\e^2 \int_Y \left( \int_0^1 \left( \nabla'\psi(\xi + t\e y) - \nabla'\psi(\xi) \right)\cdot y \right) g(y)\ dy
    \\
    &&+ \sum_{\xi\in\e\Z^d} f^h(\xi)\e^2 \int_Y \nabla'\psi(\xi)\cdot y g(y)\ dy.
  \end{eqnarray*}
  The first term on the right-hand side converges to zero as $h\to 0$ because
  $$
  \left| \nabla'\psi(\xi + t\e y) - \nabla'\psi(\xi) \right| \leq C\e
  $$
  for all $t\in [0, 1]$, simply because $\nabla'\psi$ is Lipschitz.
  \\
  Hence it remains to compute:
  \begin{align*}
    \sum_{\xi\in\e\Z^d} &\e^2 f^h(\xi) \nabla'\psi(\xi) \cdot \int_Y y g(y)\ dy
    \\
    &=
    \sum_{\xi\in\e\Z^d}\e^2 \left( f^h(\xi) \nabla'\psi(\xi) - \fint_{\xi +\e Y} f^h(z) \nabla'\psi(z)\ dz\right) \cdot \int_Y y g(y)\ dy
    \\
    &+ \sum_{\xi\in\e\Z^d}\e^2 \fint_{\xi +\e Y} f^h(z) \nabla'\psi(z)\ dz \cdot \int_Y y g(y)\ dy
    \\
    &=
    \sum_{\xi\in\e\Z^d}\e^2 \left( f^h(\xi) \nabla'\psi(\xi) - \fint_{\xi +\e Y} f^h(z) \nabla'\psi(z)\ dz\right) \cdot \int_Y y g(y)\ dy
    \\
    &+ \int_{\R^2} f^h(x) \nabla'\psi(x)\ dx \cdot \int_Y y g(y)\ dy.
  \end{align*}
  Since $\spt \psi\subset S$, the last term equals
$$
\int_S f^h(x) \nabla'\psi(x)\ dx \cdot \int_Y y g(y)\ dy.
$$
The claim follows because $f^h\stackrel{\star}{\wto} f^0$ and because
\begin{align*}
    \sum_{\xi\in\e\Z^d}\e^2 \left( f^h(\xi) \nabla'\psi(\xi) -
      \fint_{\xi +\e Y} f^h(z) \nabla'\psi(z)\ dz\right) \cdot \int_Y
    y g(y)\ dy\to 0
\end{align*}
as $h\to 0$. To see this, we compute recalling that $f^h(x) = f^h(\xi)$ for all $x\in \xi +\e Y$:
\begin{align*}
f^h(\xi)\nabla'\psi(\xi) - \fint_{\xi +\e Y} f^h(z) \nabla'\psi(z)\,dz &=
f^h(\xi) \left( \nabla'\psi(\xi) - \fint_{\xi +\e Y} \nabla'\psi(z)\,dz \right) \leq Ch,
\end{align*}
again because $\nabla'\psi$ is Lipschitz.

\end{proof}

\begin{proof}[Proof of Proposition~\ref{P:strain}, case $\gamma\in(0,\infty)$]
{\bf Step 1.\  } Without loss of generality we assume that all $u^h$ have average zero.
Theorem~\ref{T:FJM-compactness} then implies that
\begin{equation}\label{eq:5}
  \nabla_hu^h\to R:=(\nabla'u,\ n)\qquad\text{strongly in
  }L^2(\Omega,\R^{3\times 3})
\end{equation}
where $n$ denotes the normal to $u$.
Let $R^h$, $\t{R}^h$ be the maps obtained by applying Lemma~\ref{t6f2}
to $u^h$ with $\delta(h)=\eh$. Due to the uniform bound on $\nabla'\t R^h$ given by
Lemma~\ref{t6f2},  $R^h$ and $\t R^h$ are precompact in
$L^2(S,\R^{3\times 3})$.
Hence, \eqref{eq:5} combined with $||R^h-\nabla_h
u^h||_{L^2}\to 0$ (which also follows from Lemma~\ref{t6f2}) shows that $R^h$ and $\t R^h$ strongly converge in
$L^2(S,\R^{3\times 3})$ to $R$.
Following \cite{FJM1}, we introduce the approximate
 strain
\begin{equation}
  G^h(x)=\frac{(R^h)^t \nabla_h u^h(x)-\id}{h}.
\end{equation}
We set $\overline u^h(x') = \int_I u^h(x', x_3)\ dx_3$ and define $z^h\in H^1(\Omega, \R^3)$ via
\begin{equation}
  u^h(x', x_3) = \overline{u}^h(x') + hx_3 \t{R}^h(x') e_3 + h z^h(x', x_3).
\end{equation}
Then clearly $\int_I z^h(x', x_3) dx_3 = 0$ and we compute
\begin{equation}
  \label{combi-0}
  \frac{\nabla_hu^h - R_h}{h}=
  \left(\,\frac{ \nabla'\overline{u}^h - (R^h)'}{h} + x_3\nabla'\t{R}^he_3,\ \frac{1}{h} (\t{R}^he_3 - R^h e_3)\right) + \nabla_h z^h.
\end{equation}
For a given matrix $M\in \R^{3\times 3}$, we denote by $M'$ the
$3\times 2$-matrix obtained by deleting the third column. We use the notation $(y\cdot\nabla')R(x'):=y_1\partial_1R(x')+y_2\partial_2R(x')$

{\bf Step 2.\ } Let us for the moment take for granted that
there exist $B'\in L^2(S,\R^{3\times 2})$, $\t z\in
L^2(S,H^1(I\times\mathcal Y,\R^3))$, $\t v,\,\t w\in
  L^2(S,H^1(\mathcal Y,\R^3))$ and $w^0\in L^2(S,\R^3)$,
such that, after passing to a subsequence,
\begin{eqnarray}
  \label{P:strain:eq1}
  \nabla_hz^h&\wtto&  (\nabla_y \t{z}\,,\,\frac{1}{\gamma}\partial_3 \t{z}),\\
  \label{P:strain:eq2}
  \frac{\nabla'\overline{u}^h-(R^h)'}{h}&\wtto& B'(x')+\frac{1}{\gamma}(y\cdot\nabla')R'(x') +\nabla_y\t v(x',y),\\
  \label{P:strain:eq3}
  x_3\nabla'\t{R}^he_3&\wtto& x_3\nabla' R(x')e_3 + x_3\nabla_y\t w(x',y),\\
  \label{P:strain:eq4}
  \frac{1}{h} (\t{R}^he_3 - R^h e_3)&\wtto& \frac{1}{\gamma}(y\cdot\nabla')R(x')e_3
 + w^0(x') +\frac{1}{\gamma}\t w(x',y).
\end{eqnarray}
We now proceed to prove that the proposition follows from these convergences.
\\
First notice that it suffices to identify the
symmetric part of the two-scale limit $G$ of the sequence $G^h$. Indeed, since
$\sqrt{(\id+h F)^t(\id +h F)}=\id + h\sym F$ up to terms of higher
order, the convergence $G^h\wtto G$
implies $E=\sym G$ (see e.g. \cite[Lemma~4.4]{Neukamm-12} for a
proof).
\\
We now identify $\sym G$. By combining \eqref{P:strain:eq1} -- \eqref{P:strain:eq4} with
identity \eqref{combi-0}, we
find that $R^hG^h$ weakly two-scale converges to
\begin{equation}\label{P:strain:eq5}
  (B',\,0)
  +
  \Bigg(\,x_3\nabla'R(x')e_3 \,,0\,\Bigg)
  +
  \Bigg(\,\nabla_y\t\phi,\,\frac{1}{\gamma}\partial_3\t\phi\,\Bigg)
  +
  (y\cdot\nabla')R(x')
\end{equation}
where
\begin{equation*}
  \t\phi(x,y):=\t z(x,y) + \t v(x',y) +x_3\t w(x',y) + x_3w^0(x').
\end{equation*}
Due to the strong $L^2$-convergence  $R^h\to R$, we deduce that
$G^h$ weakly two-scale converges to \eqref{P:strain:eq5} multiplied  with $R^t$
from the left. The first and second term yield
\begin{equation*}
  \left(
    \begin{array}{cc}
      \t B(x')+ x_3\secf(x')&
      \begin{array}{c}
        0\\0
      \end{array}\\
      \begin{array}{cc}
        b_1(x')\ \ \ b_2(x')
      \end{array}&0
    \end{array}
\right),
\end{equation*}
where $\t B$ denotes the $2\times 2$-matrix obtained by deleting the
third column of $R^tB'$ and $(b_1,b_2)$ are defined as the entries of the third
row of $R^tB'$.
Upon left multiplication by $R^t$, the last term in \eqref{P:strain:eq5}
yields a skew-symmetric term. Thus we have shown:
\begin{equation*}
  \sym G(x,y) =
\left(
    \begin{array}{cc}
      \sym\t B + x_3 \secf&
      \begin{array}{c}
        0\\0
      \end{array}\\
      0\quad 0&0
    \end{array}
  \right)
  +\sym\Bigg(\,\nabla_y\phi,\,\frac{1}{\gamma}\partial_3\phi\,\Bigg)
\end{equation*}
where
\begin{equation*}
  \phi(x,y):=R^t(x')\t\phi(x,y)+\gamma x_3\left(
    \begin{array}{c}
      b_1(x')\\b_2(x')\\0
    \end{array}
\right).
\end{equation*}

{\bf Step 3.\ } It remains to prove \eqref{P:strain:eq1} -- \eqref{P:strain:eq4}.
Since $\nabla_hz^h$ is uniformly bounded in $L^2$ and since
$\int_I z^h\ dx_3 = 0$ by construction, \eqref{P:strain:eq1} directly follows
from  Lemma \ref{L:two-scale}.
\\
Next we prove \eqref{P:strain:eq2}. By Lemma~\ref{t6f2} and
Lemma~\ref{kompts} (i) there exists $V\in L^2(S\times Y, \R^{3\times 2})$ such that (after taking subsequences)
\begin{equation}
  \label{2scq}
  \frac{\nabla'\overline{u}^h - (R^h)'}{h}\wtto V.
\end{equation}
Let us verify that
\begin{equation*}
  V(x',y)=B'(x')+ (y\cdot\nabla')R'(x') +\nabla_y\t v(x',y),
\end{equation*}
where $B':=\int_Y V(\cdot,y)\,dy$ and $\t v\in L^2(S,H^1(\mathcal Y))$.
This is equivalent to showing that
\begin{multline}\label{P:strain:eq7}
  \iint_{S\times Y}V(x',y):(\nabla_y^\perp G)(y)\psi(x')\,dy\,dx'\\
  =
  \iint_{S\times Y}(y\cdot\nabla')R'(x'):(\nabla_y^\perp G)(y)\psi(x')\,dy\,dx'
\end{multline}
for all $G\in C^1(\mathcal Y,\R^3)$, $\psi\in C^\infty_0(S)$. (Here
and below $\nabla^\perp_y:=(-\partial_{y_2},\partial_{y_1})$).
Set $G^h(x'):=\eh G(\tfrac{x'}{\eh})$, so that $(\nabla')^\perp
G^h(x')=(\nabla_y^\perp G)(\frac{x'}{\eh})$.
\\
To prove \eqref{P:strain:eq7}, note that since $\int_S \nabla'\overline{u}^{h} : {\nabla'}^{\perp}(G^h\psi) = 0$, we have
\begin{eqnarray*}
  \lefteqn{\int_S \frac{\nabla'\overline{u}^{h}(x')}{h} : (\nabla^{\perp}_y
  G)\left(\tfrac{x'}{\eh}\right)\psi(x')\ dx'}&&\\\notag
  &=&
  \int_S \frac{\nabla'\overline{u}^{h}(x')}{h} : {\nabla'}^\perp
  G^h(x')\psi(x')\ dx' \\\notag
  &=&-\int_S \frac{\nabla'\overline{u}^{h}}{h} : G^h(x')\otimes
  {\nabla'}^{\perp}\psi(x')\ dx'\\\notag
  &=&
  -\frac{\eh}{h}\int_S \nabla'\overline{u}^{h} : G(\tfrac{x'}{\eh})\otimes
  {\nabla'}^{\perp}\psi(x')\ dx'.
\end{eqnarray*}
The right-hand side converges to $0$, since
$\frac{\eh}{h}\nabla'\overline{u}^h$ strongly converges in $L^2$ and $G(\frac{\cdot}{\eh})\wto 0$
weakly in $L^2$. In addition, Lemma~\ref{L:two-scale-pc} yields
\begin{equation}
\label{combi-8}
\frac{R^h}{h}=\frac{\eh}{h}\frac{1}{\eh}R^h \otto \frac{1}{\gamma}(y\cdot\nabla')R(x'),
\end{equation}
and thus
\begin{multline*}
  \int_S \frac{\nabla'\overline{u}^{h}(x') - (R^h)'(x')}{h} : (\nabla^{\perp}_y G)\left(\frac{x'}{\eh}\right)\psi(x')\ dx'\\
  \to-\iint_{S\times Y} \frac{1}{\gamma}(y\cdot\nabla')R'(x') : \nabla^{\perp}_y G(y)\,\psi(x')\ dy\,dx'.
\end{multline*}
On the other hand, the left-hand side converges to
$$
\iint_{S\times Y} V(x', y) : \nabla_y^{\perp}G(y)\ \psi(x')\ dy\ dx'.
$$
Hence, \eqref{P:strain:eq7} and thus \eqref{P:strain:eq2} follows.

We prove \eqref{P:strain:eq3} and \eqref{P:strain:eq4}. By Lemma~\ref{t6f2} the right-hand side
in \eqref{P:strain:eq4} is uniformly bounded in $L^2(S,\R^{3})$ and
thus we have (after passing to subsequences)
\begin{equation*}
  \frac{(\t R^h-R^h)e_3}{h}\wtto w(x',y)
\end{equation*}
for some $w\in L^2(S\times Y,\R^3)$. Set $w^0(x'):=\int_Y w(x',y)\,dy$.
Since $\t{R}^he_3\wto R e_3$ weakly in $H^1(S, \R^3)$,
we know from Lemma \ref{kompts} (v) that there exists $\t{w}\in L^2(S, H^1(I\times Y, \R^3))$
such that
\begin{equation}
\label{combi-2}
\nabla' \t{R}^h e_3 \wtto \nabla'Re_3 + \nabla_y\t{w}.
\end{equation}
This implies \eqref{P:strain:eq3}. The combination of \eqref{combi-2} with
Lemma \ref{L:two-scale-h1} yields $\frac{\t{R}^he_3}{h}\otto \gamma^{-1}\t{w}$.
Together with \eqref{combi-8} we get
$$
\frac{(\t{R}^h - R^h)e_3}{h}\otto \gamma^{-1}\t{w}(x',y) + \gamma^{-1}(y\cdot\nabla') R(x') e_3,
$$
and  \eqref{P:strain:eq4} follows from Lemma~\ref{L:osc}.

\end{proof}

\begin{proof}[Proof of Proposition~\ref{P:strain}, case
  $\gamma=\infty$]
  The argument is similar to the case $\gamma\in(0,\infty)$. Therefore
  we only indicate the required modifications. Step 1 (of the
  proof for $\gamma\in(0,\infty)$) holds verbatim modulo the following
  change: As a difference to $\gamma\in(0,\infty)$, in the case
  $\gamma=\infty$ we set $\delta(h):=\lceil\tfrac{h}{\eh}\rceil \eh$
  where $\lceil s\rceil$ denotes the smallest, positive integer larger
  or equal to $s$. By construction $\delta(h)$ is an integer multiple of
  $\eh$ and we have $\frac{h}{\delta(h)}\sim 1$. Hence,
  Lemma~\ref{t6f2} yields maps $R^h$ and $\t R^h$ with bounds uniform
  in $h$, and moreover $R^h$ is constant on each cube $x+\eh Y$,
  $x\in\eh\Z^2$.

  Similar to Step 2 (of the proof for $\gamma\in(0,\infty)$) the statement
  of the proposition can be reduced to show that (up to subsequences)
\begin{eqnarray}
  \label{P:strain:eq1b}
  \nabla_hz^h&\wtto&  (\nabla_y \t{z}\,,\,d'),\\
  \label{P:strain:eq2b}
  \frac{\nabla'\overline{u}^h-(R^h)'}{h}&\wtto& B'(x') +\nabla_y\t v(x',y),\\
  \label{P:strain:eq3b}
  x_3\nabla'\t{R}^he_3&\wtto& x_3\nabla' R(x')e_3 + x_3\nabla_y\t w(x',y),\\
  \label{P:strain:eq4b}
  \frac{1}{h} (\t{R}^he_3 - R^h e_3)&\wtto& w^0(x').
\end{eqnarray}
where $R$ is defined as in the case $\gamma\in(0,\infty)$, and $B'\in L^2(S,\R^{3\times 2})$, $\t z\in
L^2(\Omega, \mathring H^1(\mathcal Y,\R^3))$, $d'\in L^2(\Omega,\R^3)$, $\t v,\,\t w\in
  L^2(S,H^1(\mathcal Y,\R^3))$ and $w^0\in L^2(S,\R^3)$. Indeed, by
  the same arguments as for $\gamma\in(0,\infty)$,
  \eqref{P:strain:eq1b} --   \eqref{P:strain:eq4b} imply that
  \begin{equation*}
    \sym G(x,y)=
    \left(
      \begin{array}{cc}
        \sym\t B + x_3 \secf&
        \begin{array}{c}
          0\\0
        \end{array}\\
        0\quad 0&0
      \end{array}
    \right)
    +
    \sym\left (\nabla_y \phi(x, y)\,,\,d(x) \right),
  \end{equation*}
  where
  \begin{equation*}
    \phi(x,y):=R^t(\t z+\t v+x_3\t w),\qquad d=R^td'+R^tw^0 +\left(\begin{array}{c} b_1 \\ b_2 \\0 \end{array} \right),
  \end{equation*}
  and $\t B$, $R$ and $(b_1,b_2)$ are defined as in the case
  $\gamma\in(0,\infty)$.

  The proof of   \eqref{P:strain:eq1b} --   \eqref{P:strain:eq4b} is
  similar to Step~3 of the proof for the case $\gamma\in(0,\infty)$.
\end{proof}

\section{Proof of Theorem~\ref{T:main}}\label{S:proof}

As a preliminary step we need to establish some continuity properties of the quadratic form appearing
in \eqref{ass:expansion} and its relaxed version introduced in Definition~\ref{D:relaxation-formula}.

\begin{lemma}
  \label{lem:1}
  Let $W$ be as in Assumption~\ref{assumption} and let $Q$ be the quadratic form
  associated to $W$ through the expansion \eqref{ass:expansion}. Then
  \begin{enumerate}[(i)]
  \item[(Q1)] $Q(\cdot,G)$ is $Y$-periodic and
      measurable for all $G\in\R^{3\times 3}$,
  \item[(Q2)] for  almost every $y\in\R^2$ the map
      $Q(y,\cdot)$ is quadratic and satisfies
    \begin{equation*}
      c_1|\sym G|^2\leq Q(y,G)=Q(y,\sym G)\leq c_2|\sym G|^2\qquad\text{for all $ G\in\R^{3\times 3}$.}
    \end{equation*}
  \end{enumerate}
  Furthermore,  there exists a monotone function $r:\R_+\to\R_+\cup\{+\infty\}$ that can be chosen only depending on the
  parameters $c_1,c_2$ and $\rho$, such that $r(\delta)\to 0$ as
  $\delta\to 0$ and
  \begin{equation}\label{eq:94}
    \forall  G\in\R^{3\times 3}\,:\,|W(y,\id+ G)-Q(y, G)|\leq| G|^2r(| G|)
  \end{equation}
  for  almost every $y\in\R^2$.
\end{lemma}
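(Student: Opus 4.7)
The plan is to identify $Q$ explicitly as a pointwise rescaled limit of $W$, then read off properties (Q1), (Q2) from the three structural assumptions (FI), (ND), (QE) one by one, leaving the uniform-in-$y$ remainder bound for last.

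\textbf{Paragraph 1 (Q1 and quadraticity).} From (QE), for each $G \in \R^{3\times 3}$ and a.e.\ $y$ the limit
\[
Q(y,G) = \lim_{t\to 0^+} \frac{W(y, \id + tG)}{t^2}
\]
exists. So $Q(\cdot, G)$ is measurable (pointwise a.e.\ limit of measurable functions) and $Y$-periodic, inheriting these properties from $W$; quadraticity of $Q(y,\cdot)$ is built into (QE).

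\textbf{Paragraph 2 (symmetry and bounds).} Using polar decomposition, write $\id + tG = R_t U_t$ with $R_t \in \SO 3$ and $U_t = \sqrt{(\id+tG)^T(\id+tG)} = \id + t\,\sym G + O(t^2)$ symmetric. By (FI), $W(y, \id+tG) = W(y, U_t)$. Dividing by $t^2$, passing $t\to 0^+$, and invoking the quadratic expansion twice (once for $G$, once for $\sym G + O(t)$), gives $Q(y, G) = Q(y, \sym G)$. For the bounds, use that $\dist^2(\id + tG, \SO 3) = |U_t - \id|^2 = t^2|\sym G|^2 + O(t^3)$, then apply (ND) (valid for $t$ small enough so $\dist^2 \leq \rho$) and divide by $t^2$ to extract
\[
c_1|\sym G|^2 \leq Q(y,G) \leq c_2|\sym G|^2.
\]

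\textbf{Paragraph 3 (uniform remainder --- the main obstacle).} First reduce to symmetric arguments: by (FI), $W(y, \id+G) = W(y, \id+E)$ with $E := \sqrt{(\id+G)^T(\id+G)} - \id$ symmetric and $|E - \sym G| \leq C|G|^2$ (for $|G|$ small, with $C$ depending only on the polar decomposition, not on $y$). Bilinearity of $Q(y,\cdot)$ and the bound $|Q(y,\cdot)| \leq c_2|\cdot|^2$ then give $|Q(y,G) - Q(y,E)| \leq C|G|^3$. Hence it suffices to bound
\[
\frac{|W(y, \id + E) - Q(y, E)|}{|E|^2}
\]
uniformly in $y$, over symmetric $E$ with $|E|$ small. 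The key observation is that by (ND), this quotient is a priori bounded by $2c_2$ as soon as $|E|^2 \leq \rho$, so defining
\[
r(\delta) := \sup_{E = E^T,\,0 < |E| \leq \delta}\;\esssup_{y \in Y} \frac{|W(y, \id+E) - Q(y,E)|}{|E|^2}
\]
yields a monotone function bounded by $2c_2$ for $\delta^2 \leq \rho$, with constants depending only on $c_1, c_2, \rho$ (plus the $O(|G|^3)$ cost of the symmetry reduction).

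The hardest step is showing $r(\delta)\to 0$: (QE) only guarantees pointwise-in-$y$ convergence, while $W(y,\cdot)$ is merely measurable in $y$, so one cannot naively apply compactness on $Y$. The uniformity must be forced by the quantitative non-degeneracy (ND): it controls $W(y,\cdot)$ near $\SO 3$ by $\dist^2(\cdot, \SO 3)$ with $y$-independent constants, so the Taylor remainder inherits a $y$-uniform decay rate. Concretely, one writes the difference via the integral identity
\[
W(y,\id+E) - Q(y,E) = \int_0^1 (1-s)\bigl[\partial_s^2 W(y,\id + sE) - 2Q(y,E)\bigr]\,ds
\]
(in a weak sense, using continuity of $W(y,\cdot)$), and uses (ND) together with the already-proved identity $Q(y,G)=Q(y,\sym G)$ to absorb the $y$-dependence into constants controlled by $c_1, c_2, \rho$ alone. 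Putting the two reductions together yields the claimed $r$.
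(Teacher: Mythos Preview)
The paper does not prove this lemma; it refers to \cite[Lemma~2.7]{Neukamm-12}. Your Paragraphs~1 and~2 establishing (Q1) and (Q2) are correct and standard.

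There is a genuine gap in Paragraph~3. You correctly flag the obstacle---(QE) is only assumed pointwise a.e.\ in $y$---but your proposed resolution fails on two counts. First, the Taylor-type integral identity you write requires $s\mapsto W(y,\id+sE)$ to be twice differentiable, whereas Assumption~\ref{assumption} gives only continuity of $W(y,\cdot)$; the phrase ``in a weak sense'' is not an argument. Second, and more fundamentally, (ND) alone cannot force the remainder to decay uniformly in $y$. A concrete obstruction: take
\[
W(y,F)=c_1\,d^2+(c_2-c_1)\,d^2\min\!\big(\tfrac{d}{y_1},\,1\big),\qquad d:=\dist(F,\SO 3),\quad y_1\in(0,1),
\]
extended $Y$-periodically in $y$. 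This $W$ satisfies (FI), (ND) and (QE) with $Q(y,G)=c_1|\sym G|^2$, yet for symmetric $G$ with $|G|=\delta<1$ and any $y_1\in(0,\delta)$ one has $\dist(\id+G,\SO 3)=|G|$ and hence $|W(y,\id+G)-Q(y,G)|/|G|^2=c_2-c_1$. Thus the essential supremum over $y$ equals $c_2-c_1$ for every $\delta\in(0,1)$, and no $r$ with $r(\delta)\to 0$ can satisfy \eqref{eq:94} for this $W$. The bound $r(\delta)\leq 2c_2$ that you extract from (ND) is correct, and that is precisely where (ND) stops helping.

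In short, the uniform decay $r(\delta)\to 0$ cannot be extracted from Assumption~\ref{assumption} as written; one needs the convergence in (QE) to be assumed uniform in $y$. You should check the precise hypotheses in the cited reference.
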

(For a proof see \cite[Lemma~2.7]{Neukamm-12}.)

\begin{lemma}\label{L:Qgamma}
  \begin{enumerate}[(a)]
  \item Let $\gamma\in(0,\infty)$.  For all $ A \in \R^{2\times
      2}_{\sym}$  there exist a unique pair $(B,\phi)$ with  $B \in  \R^{2\times 2}_{\sym}$
    and $\phi \in \mathring H^1(I\times\mathcal Y,\R^3)$ such that:
  \begin{equation*}
    Q_{2,\gamma}(A)=\iint_{I\times
      Y}Q\left(y,\;\Lambda(x_3, A, B)+(\nabla_y
      \phi \ , \ \tfrac{1}{\gamma}\partial_3\phi)\,\right) dydx_3
  \end{equation*}
  The induced mapping $\R^{2\times 2}_{\sym}\ni A\mapsto(B,\phi)\in
  \R^{2\times 2}_{\sym}\times\mathring H^1(I\times\mathcal Y,\R^3)$ is
  bounded and linear.

\item Let $\gamma=\infty$.  For all $ A \in \R^{2\times 2}_{\sym}$
  there exist a unique triple $(B,\phi,d)$ with $B \in  \R^{2\times
    2}_{\sym}$, $\phi\in L^2(I,\mathring H^1(\mathcal Y,\R^3))$ and
  $d\in L^2(I,\R^3)$ such that:
  \begin{equation*}
    Q_{2,\infty}(A)=\iint_{I\times
      Y}Q\left(y,\;\Lambda(x_3, A, B)+(\nabla_y
    \phi \ , \ d)\,\right) dydx_3
  \end{equation*}
  The induced mapping $\R^{2\times 2}_{\sym}\ni A\mapsto(B,\phi,d)\in
  \R^{2\times 2}_{\sym}\times L^2(I,\mathring H^1(\mathcal
  Y,\R^3))\times L^2(I,\R^3)$ is
  bounded and linear.
  \end{enumerate}
 \end{lemma}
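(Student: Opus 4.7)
The plan is to recast each variant of the relaxation problem as the minimization of a continuous, coercive quadratic form on a Hilbert space; existence, uniqueness, and the linearity and boundedness of the solution map then all follow from the Lax--Milgram theorem. In case (a) work on $\mathcal{H}_\gamma:=\R^{2\times 2}_{\sym}\times\mathring H^1(I\times\mathcal Y,\R^3)$ with its natural norm, and set $\Phi_\gamma[\phi]:=(\nabla_y\phi,\tfrac{1}{\gamma}\partial_3\phi)$. In case (b) work on $\mathcal H_\infty:=\R^{2\times 2}_{\sym}\times L^2(I,\mathring H^1(\mathcal Y,\R^3))\times L^2(I,\R^3)$ with $\Phi_\infty[\phi,d]:=(\nabla_y\phi,d)$. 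Writing $\iota:\R^{2\times 2}_{\sym}\hookrightarrow\R^{3\times 3}_{\sym}$ for the embedding into the upper-left block, the integrand $Q(y,\iota(B)+x_3\iota(A)+\Phi[\cdot])$ expands by bilinearity of $Q(y,\cdot)$ as $a(\xi,\xi)-2\ell_A(\xi)+c(A)$, where $a$ is a symmetric continuous bilinear form on the Hilbert space (independent of $A$), $\ell_A$ depends linearly on $A$, and $c(A)$ is a quadratic form in $A$ alone.

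The substantive step is coercivity of $a$. From (Q2) in Lemma~\ref{lem:1} one has
\[
a(\xi,\xi)\geq c_1\iint_{I\times Y}|\sym M|^2\,dy\,dx_3,\qquad M:=\iota(B)+\Phi[\phi],
\]
with the obvious modification in case (b). The key point is that $\iota(B)$ is constant in $(x_3,y)$ and supported in the upper-left $2\times 2$ block, while the $y$-mean of $\sym\Phi[\phi]$ (resp.\ $\sym\Phi_\infty[\phi,d]$) can only live in the third row/column; this $L^2$-orthogonality, together with $\int_Y\nabla_y\phi=0$, yields in case (a) the clean identity $\iint|\sym M|^2=|B|^2+\iint|\sym\Phi_\gamma[\phi]|^2$, and an analogous three-piece splitting $|B|^2+\int_I|\sym(0,0,d)|^2+\iint|\sym(\nabla_y\phi,0)|^2$ in case (b). It remains to bound $\|\phi\|_{H^1}^2$ (and $\|d\|_{L^2}^2$ in (b)) from above by the right-hand side. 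In case (a) this is Korn's inequality on the bounded Lipschitz domain $\mathcal Y\times(\gamma I)$ obtained by rescaling $x_3\mapsto\gamma x_3$: the infinitesimal rigid motions compatible with $y$-periodicity reduce to constants (a skew matrix whose first two columns vanish is zero), and the zero-mean condition rules these out, so the sharp Korn bound holds; a Poincar\'e inequality controls $\|\phi\|_{L^2}$ by $\|\nabla\phi\|_{L^2}$. In case (b) one bounds $|d|^2$ directly by $|\sym(0,0,d)|^2$, while slicewise Korn in $y$ applied to $(\phi_1,\phi_2)$ together with a direct estimate of $\nabla_y\phi_3$ controls $\nabla_y\phi$, and slicewise Poincar\'e yields the $L^2(I,H^1(\mathcal Y))$ bound on $\phi$.

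Given coercivity $a(\xi,\xi)\geq c\|\xi\|_{\mathcal H}^2$ and the obvious estimate $\|\ell_A\|_{\mathcal H^*}\leq C|A|$ (which comes from the single factor $x_3\iota(A)$ and the continuity of the bilinear form associated with $Q$), Lax--Milgram produces a unique $\xi_A\in\mathcal H$ with $a(\xi_A,\cdot)=\ell_A$, and this $\xi_A$ is the unique minimizer of the functional. Linearity of $A\mapsto\xi_A$ is inherited from linearity of $A\mapsto\ell_A$, and the bound $\|\xi_A\|_{\mathcal H}\leq c^{-1}\|\ell_A\|_{\mathcal H^*}\leq C|A|$ gives boundedness. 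I expect the only delicate point to be the justification of the Korn inequality in case (a) in the presence of the $\gamma$-rescaling and the mixed periodic/free boundary conditions; the triviality of the kernel carried out above reduces it to the standard Korn inequality on a box.
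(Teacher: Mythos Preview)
Your proposal is correct and takes essentially the same route as the paper: both arguments hinge on the Korn-type inequality \eqref{L:Qgamma:korn} and the ellipticity of $Q$ from (Q2). The only organizational difference is that the paper minimizes in two stages---first over $\phi$ for fixed $(A,B)$ to obtain a finite-dimensional quadratic form $\widetilde Q_\gamma(A,B)$, then over $B$ via an explicit block decomposition---whereas you minimize over $(B,\phi)$ in one shot via Lax--Milgram; your orthogonality argument for coercivity is precisely what the paper leaves implicit when it asserts that $\widetilde Q_\gamma$ is positive definite.
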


\begin{proof}[Proof of Lemma~\ref{L:Qgamma}]
  The proof is standard.

We only comment on the proof of part (a), i.~e. for $\gamma\in(0,\infty)$. We
  start with the following Korn-type inequality

  \begin{equation}\label{L:Qgamma:korn}
    \forall\psi\in \mathring
    H^1(I\times\mathcal Y,\R^3)\,:\quad
    \|\psi\|_{H^1(I\times Y,\R^3)}^2\leq C\iint_{I\times Y}|\sym (\nabla_y
    \psi \ , \ \tfrac{1}{\gamma}\partial_3\psi)|^2
  \end{equation}
  for some constant $C>0$. Since $Q$ is elliptic in the sense of
  Lemma~\ref{lem:1} (Q2), and since $Q$ vanishes for skew-symmetric matrices, for
  each pair $(A,B)$ of symmetric $2\times2$ matrices we can find
  a unique function $\phi=\phi_{A,B}\in  \mathring
  H^1(I\times\mathcal Y,\R^3)$ minimizing the integral
  \begin{equation}\label{eq:13}
    \t Q_\gamma(A,B):=\iint_{I\times Y}Q\left( y,\;\Lambda(x_3, A, B)+(\nabla_y
      \phi \ , \ \tfrac{1}{\gamma}\partial_3\phi)\,\right)\,d  y\,d x_3.
  \end{equation}
  Evidently $\phi$ depends linearly on
  $(A, B)$. In particular, for $i=1,\ldots,3$ there exist
  $E_i \in\mathring H^1(I\times \mathcal Y, \R^{2\times 2}_{\sym})$, $F_i \in\mathring H^1(I\times \mathcal Y, \R^{2\times 2}_{\sym})$ such
  that
  \begin{eqnarray} \label{eq:10001}
    & & \phi_i(x_3,y)=E_i(x_3,y):A+ F_i (x_3,y):B.
  \end{eqnarray}
  As a consequence $\t Q_\gamma$ is a quadratic form  and it is easy to
  check that there exist positive constants $c_{\gamma,1}<c_{\gamma,2}$ such that
  $$
c_{\gamma,1} (|A|^2+|B|^2 ) \leq \t Q_\gamma (A,B) \leq c_{\gamma,2} (|A|^2+|B|^2 ).
$$
  Hence, we conclude that there exists a bounded, positive definite
  operator \mbox{$\mathcal{A}: \R^{2\times 2}_{\sym}\times \R^{2\times
      2}_{\sym}\to \R^{2\times 2}_{\sym}\times \R^{2\times 2}_{\sym}$} such that
  $$\t Q_\gamma (A,B)= \langle\mathcal{A}(A,B),\,(\sym
  A,B)\rangle,$$
  where $\langle\cdot,\cdot\rangle$ denotes the inner product in $\R^{2\times 2}_{\sym}\times \R^{2\times
    2}_{\sym}$. By a block-wise decomposition of the right-hand side we get
  $$
  \t Q(A,B)=\mathcal{A}_1
  A : A+ \mathcal{A}_2 (A) : B
  +\mathcal{A}_3 B : B$$
  where $\mathcal{A}_1,\mathcal{A}_2,\mathcal{A}_3 \in L(\R^{2\times
    2}_{\sym}) $ are bounded operators on $\R^{2\times2}_{\sym}$ and
  $\mathcal A_1$ and $\mathcal A_3$ are positive definite. A
  straightforward calculation shows that this expression is minimized
  (with respect to  $B$) by $B_A=-\frac{1}{2} \mathcal{A}_3^{-1} \mathcal{A}_2
  \sym A$. Since this expression is obviously linear in $A$, the desired
  pair of functions associated with $A$ is given by $B_A$ and
  $\phi=\phi_{A,B_A}$.
\end{proof}

\begin{proof}[Proof of Theorem~\ref{T:main} (lower bound)]
  Without loss of generality we may assume that
  $\fint_{\Omega}u^h\,dx=0$ and $\limsup_{h\to
    0}h^{-2}\mathcal E^{h,\eh}(u^h)<\infty$. We only consider the case
  $\gamma\in(0,\infty)$. The proof in the case $\gamma=\infty$ holds verbatim. In view of
  \eqref{ass:non-degenerate}, the sequence $u^h$ has finite
  bending energy and the sequence $E^h$, see \eqref{eq:strain}, is bounded in
  $L^2(\Omega,\R^{3\times 3})$. Hence, from Theorem~\ref{T:FJM-compactness}
  we deduce that $u\in\WW(S,\R^3)$. By Lemma~\ref{kompts} (i) and Proposition~\ref{P:strain}
  (i) we can pass to a subsequence such that, for some $E\in L^2(\Omega\times Y; \R^{3\times 3})$,
  \begin{equation*}
    E^h\wtto E,
  \end{equation*}
  where $E$ can be written in the form of \eqref{P:strain:1a}. As in \cite{FJM1}
  a careful Taylor expansion of $W(\frac{x'}{\eh},\id+h E^h(x))$
  combined with the lower semi-continuity of convex integral functionals
  with respect to  weak two-scale convergence (see e.g. \cite[Proposition~1.3]{Visintin-07}) yields the lower bound
  \begin{equation*}
    \liminf\limits_{h\to 0}\frac{1}{h^2}\mathcal E^{h,\eh}(u^h)\geq
    \iint_{\Omega\times Y}Q(x',y,E(x,y))\,dy\,dx.
  \end{equation*}
  Combined with \eqref{P:strain:1a} the right-hand side becomes
  \begin{equation*}
    \iint_{Q\times Y}Q\Bigg(\,x',y,\Lambda(\secf(x'),B(x')+\left (\nabla_y \phi(x, y)\,,\,\tfrac{1}{\gamma}
  \partial_3 \phi(x, y) \right)\,\Bigg)\,dy\,dx,
  \end{equation*}
  where we used that $Q(x',y,F)$ only depends on $F$. Minimization
  over $B\in L^2(S,\R^{2\times 2})$ and $\phi\in L^2(S,
  H^1(I\times\mathcal Y,\R^3))$ yields
  \begin{equation*}
    \liminf\limits_{h\to 0}\frac{1}{h^2}\mathcal E^{h,\eh}(u^h)\geq
    \int_{S}Q_{2,\gamma}(\secf(x'))\,dx'=\mathcal E_\gamma(u).
  \end{equation*}
\end{proof}

It remains to prove the upper bound. As in \cite{Schmidt-07} and other related results,
the key ingredient here is the density result for $W^{2,2}$ isometric immersions
established in \cite{Hornung-arma1, Hornung-arma2} (cf. also \cite{Pakzad} for an earlier result in this direction).
It is the need for the results in \cite{Hornung-arma2} that forces us to consider domains $S$
which are not only Lipschitz but also piecewise $C^1$ -- more precisely, we only need that the outer unit normal
be continuous away from a subset of $\partial S$ with length zero.

For brevity, we denote by $\mathcal A(S)$ the set of all
$u\in W^{2,2}_{\delta}(S, \R^3)\cap C^{\infty}(\overline{S}, \R^3)$
with the following property:
\\
For  each $B\in C^{\infty}(\overline{S}, \R^{2\times 2}_{\sym})$ with satisfying
$B = 0$ in a neighborhood of $\{x\in S : \Pi(x) = 0 \}$, there exist $\a\in
C^{\infty}(\overline{S})$ and $g\in C^{\infty}(\overline{S}, \R^2)$ such that
\begin{equation}
\label{le71-001}
B = \sym\nabla' g + \a\Pi.
\end{equation}

The key ingredient in the proof of the upper bound is the following lemma.

\begin{lemma}
\label{le71}
The set $\mathcal A(S)$ is dense in $W^{2,2}_{\delta}(S)$ with respect to the strong $W^{2,2}$-topology.
\end{lemma}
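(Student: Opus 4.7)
The plan has two stages: first, use the density of smooth isometric immersions in $W^{2,2}_\delta(S,\R^3)$; second, show that every smooth isometric immersion actually belongs to $\mathcal A(S)$, so that the approximating sequence can be taken in $\mathcal A(S)$.

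For the first stage I would invoke the density results of \cite{Hornung-arma1, Hornung-arma2}, which under the piecewise $C^1$-assumption on $\partial S$ guarantee that $C^\infty(\overline S,\R^3)\cap W^{2,2}_\delta(S,\R^3)$ is dense in $W^{2,2}_\delta(S,\R^3)$ in the strong $W^{2,2}$-topology. Thus it suffices to show that an arbitrary smooth isometric immersion $u$ satisfies the defining property of $\mathcal A(S)$: given $B\in C^\infty(\overline S,\R^{2\times 2}_{\sym})$ vanishing in an open neighborhood $\mathcal N$ of $\{\Pi=0\}$, one must produce $\alpha\in C^\infty(\overline S)$ and $g\in C^\infty(\overline S,\R^2)$ with $B=\sym\nabla' g+\alpha\Pi$.

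For the second stage I would exploit the developable structure of the smooth isometric surface. On the open set $U:=\{\Pi\neq 0\}$ the Gauss equation forces $\det\Pi=0$, so $\Pi$ has rank one and admits a smooth principal direction $\nu$; the kernel direction $\nu^\perp$ integrates into the straight rulings of the developable surface $u(U)$. Schmidt's local solvability result \cite[Lemma~3.3]{Schmidt-07} provides, in a neighborhood of any point of $U$, a local smooth pair $(g_{\mathrm{loc}},\alpha_{\mathrm{loc}})$ solving \eqref{le71-001}. To globalize, I would decompose \eqref{le71-001} in the frame $\{\nu,\nu^\perp\}$: the $(\nu^\perp,\nu^\perp)$-component becomes an ODE for $g$ along each ruling, which I integrate with zero Cauchy data at the point where the ruling meets $\overline{\mathcal N}$ (where $B\equiv 0$); the $(\nu,\nu^\perp)$-component then determines the shear of $g$ by another ODE along the same ruling; finally the $(\nu,\nu)$-component is solved algebraically for $\alpha$, using $\Pi(\nu,\nu)\neq 0$ on $U$. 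Setting $g=0$ and $\alpha=0$ on $\mathcal N$, these pieces glue to a smooth global pair on $\overline S$.

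The hard part is precisely this local-to-global passage. Equation \eqref{le71-001} is an underdetermined elliptic-free system and solutions have a large gauge ambiguity, so neither a partition-of-unity gluing of Schmidt's local solutions nor a uniqueness argument is directly available. The construction above works because the smooth developable structure of $u$ on $U$ provides canonical characteristic curves (the rulings) along which the equation reduces to ODEs, and because the hypothesis $B\equiv 0$ on $\mathcal N$ furnishes the trivial Cauchy data that make the ODE solution match smoothly with the zero solution across $\partial\mathcal N$. Smoothness of $(g,\alpha)$ on $\overline S$ then follows from smoothness of $u$, of the ruling foliation on $U$, and of $B$, together with the fact that the integration is only performed on compact subsets of $U$.
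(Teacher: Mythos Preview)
Your two-stage strategy---density of smooth isometries via \cite{Hornung-arma1,Hornung-arma2}, then solvability of \eqref{le71-001} on the smooth approximants using the developable structure and Schmidt's local lemma---is exactly the route the paper takes; its proof consists of a single sentence citing precisely these references together with \cite{HoLePa}.

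Two points deserve care. First, the paper's wording ``combining the \emph{construction} from \cite{Hornung-arma1,Hornung-arma2}'' hints that one exploits not merely the density statement but the specific structure of Hornung's approximants: they are \emph{finitely developable}, i.e.\ $\{\Pi\neq 0\}$ is a finite union of bands each smoothly foliated by straight segments with a controlled transversal. On such approximants Schmidt's local argument globalizes without difficulty. Your stronger assertion that \emph{every} smooth isometric immersion lies in $\mathcal A(S)$ is plausible (and essentially the content of \cite{HoLePa}), but it is more than what is needed and harder to justify.

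Second, your integration scheme has a gap as stated. A ruling in a component of $\{\Pi\neq 0\}$ may run from $\partial S$ to $\partial S$ without ever meeting $\overline{\mathcal N}$, so no point with ``zero Cauchy data'' is available on it; you must then choose initial data on such rulings and argue that this choice can be made smoothly and compatibly with the zero data on neighboring rulings that do meet $\mathcal N$. Moreover, prescribing $g=0$, $\alpha=0$ on all of $\mathcal N$ can be inconsistent: if a ruling enters $\mathcal N$, exits through $\operatorname{supp}B$, and re-enters $\mathcal N$, the value of $g$ upon re-entry is determined by $\int B$ along the intermediate arc and need not be zero. Neither issue arises for the finitely-developable approximants, which is another reason to use them directly.
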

\begin{proof}
This follows by combining the construction from \cite{Hornung-arma1, Hornung-arma2}
with the arguments in \cite{Schmidt-07} leading to his Lemma 3.3.
His result was recently re-derived in \cite{HoLePa} in a slightly different context.
\end{proof}

Thanks to Lemma \ref{le71} it will be enough to construct recovery sequences for limiting
deformations belonging to $\mathcal{A}(S)$. First we present a construction assuming the
existence of $\a$ and $g$ satisfying \eqref{le71-001}.

\begin{lemma}
  \label{le73}
  For $u\in\WW(S,\R^3)\cap W^{2,\infty}(S,\R^3)$, $\alpha\in
  W^{1,\infty}(S)$ and $g\in W^{1,\infty}(S,\R^2)$ define
  \begin{equation*}
    B(x')=\sym\nabla'g(x')+\alpha(x')\secf(x').
  \end{equation*}
  \begin{enumerate}[(a)]
  \item Let $\gamma\in(0,\infty)$ and
    $\phi\in C^\infty_c(S,C^\infty(I\times\mathcal
    Y,\R^3))$. Then there exists a sequence
    $\{u^h\}_{h>0}\subset H^1(\Omega, \R^3)$ such that
    \begin{equation*}
      u^h\to u,\qquad \nabla_h u^h\to
      (\nabla'u,\,n)\qquad\text{uniformly in }\Omega
    \end{equation*}
    and
    \begin{multline}
      \label{le73-1a}
      \lim_{h \to 0} \frac{1}{h^2}\mathcal E^{h,\eh}(u^h)\\
      =\iint_{\Omega\times Y}Q\left(
        y,\;\Lambda(\secf(x'),B(x'))+(\nabla_y\phi,\tfrac{1}{\gamma}\partial_3\phi)\,\right)\,d  y \,d x_3 \,d x'.
    \end{multline}
  \item Let $\gamma=\infty$, $\phi\in C^\infty_c(\Omega,C^\infty(\mathcal
    Y,\R^3))$ and $d\in C^\infty_c(\Omega,\R^3)$. Then there exists a sequence
    $\{u^h\}_{h>0}\subset H^1(\Omega, \R^3)$ such that
    \begin{equation*}
      u^h\to u,\qquad \nabla_h u^h\to
      (\nabla'u,\,n)\qquad\text{uniformly in }\Omega
    \end{equation*}
    and
    \begin{multline}
      \label{le73-1b}
      \lim_{h \to 0} \frac{1}{h^2}\mathcal E^{h,\eh}(u^h)\\
      =\iint_{\Omega\times Y}Q\left(
        y,\;\Lambda(\secf(x'),B(x'))+(\nabla_y\phi,\,d)\,\right)\,d  y \,d x_3 \,d x'.
    \end{multline}
  \end{enumerate}
\end{lemma}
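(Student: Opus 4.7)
The plan is to build $u^h$ from $u$ by a hierarchy of corrections scaled in $h$ and $\eh$ so that each produces a specific term in the limit strain. The Cosserat term $hx_3n$ yields the $x_3\secf$ contribution in the $2\times2$ block; the in-plane correction $h\eta$ with $\eta:=g_\alpha\partial_\alpha u+\alpha n$ yields $B=\sym\nabla'g+\alpha\secf$ via a pointwise computation using the isometry identities; an oscillating corrector of prefactor $h\eh$ yields the two-scale term $(\nabla_y\phi,\tfrac{1}{\gamma}\partial_3\phi)$ in case (a) or the in-plane part $\nabla_y\phi$ in case (b); an order-$h^2$ normal correction $x_3\mu(x')$ fixes the third row and column of the strain in both cases; and for case (b) an additional term $h^2R(x')D(x)$ with $\partial_3D=d$ produces the free out-of-plane vector $d$.

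\textbf{Ansatz and strain convergence.} For case (a) I would set
\begin{equation*}
u^h(x)\;:=\;u(x')+hx_3n(x')+h\eta(x')+h^2x_3\mu(x')+h\,\eh\,R(x')\,\phi(x',x_3,x'/\eh),
\end{equation*}
with $R:=(\nabla'u,n)\in\SO{3}$ and $\mu\in W^{1,\infty}(S,\R^3)$ the unique vector field determined by the conditions $\mu\cdot n=0$ and $\mu\cdot\partial_\beta u=g_\alpha\secf_{\alpha\beta}-\partial_\beta\alpha$ for $\beta=1,2$ (solvable because $\{\partial_1u,\partial_2u,n\}$ is an orthonormal frame at every point). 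A direct computation shows $\nabla_hu^h\to R$ uniformly in $\Omega$. Writing $\nabla_hu^h=R(I+h\Phi^h)$ and using the isometry identities $\partial_\alpha u\cdot\partial_\beta u=\delta_{\alpha\beta}$, $\partial_\alpha u\cdot n=0$ and $\partial_\alpha\partial_\beta u=-\secf_{\alpha\beta}n$, one finds uniformly
\begin{equation*}
\Phi^h\;=\;R^T\bigl(x_3\nabla'n+\nabla'\eta,\;\mu\bigr)+\bigl(\nabla_y\phi,\,\tfrac{1}{\gamma}\partial_3\phi\bigr)+o(1).
\end{equation*}
The specific choice of $\mu$ is made so that $\sym\Phi^h$ strongly two-scale converges to $\Lambda(x_3,\secf,B)+\sym(\nabla_y\phi,\tfrac{1}{\gamma}\partial_3\phi)$; since $E^h=\sym\Phi^h+O(h|\Phi^h|^2)$ with $h\Phi^h\to0$ uniformly, the same strong two-scale limit holds for $E^h$.

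\textbf{Energy passage.} The quadratic expansion \eqref{eq:94} from Lemma~\ref{lem:1} yields, uniformly in $x$,
\begin{equation*}
\tfrac{1}{h^2}W(x'/\eh,I+hE^h(x))\;=\;Q(x'/\eh,E^h(x))+|E^h|^2r(h|E^h|),
\end{equation*}
and the remainder vanishes uniformly because $h|E^h|$ does. Strong two-scale convergence of $E^h$ together with the continuity and quadratic growth of $Q(y,\cdot)$ from Lemma~\ref{lem:1}\,(Q2) then gives
\begin{equation*}
\int_\Omega Q(x'/\eh,E^h(x))\,dx\;\to\;\iint_{\Omega\times Y}Q(y,E(x,y))\,dy\,dx,
\end{equation*}
which is exactly the right-hand side of \eqref{le73-1a}.

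\textbf{Main obstacle.} Two technical points need care. First, the identification of $\mu$: the cancellation of the unwanted contributions $-g_\gamma\secf_{\gamma\beta}+\partial_\beta\alpha$ in the third row of $\sym\Phi^h$ depends essentially on the hypothesis $B=\sym\nabla'g+\alpha\secf$, which is precisely why the density result Lemma~\ref{le71} is indispensable. Second, in case (b) one has $\eh/h\to\infty$, and any $x_3$-dependence of $\phi$ in the oscillating corrector produces an out-of-plane contribution $\eh R\partial_3\phi$ that blows up in $E^h$ at rate $\eh/h$. This is resolved by first proving the lemma when $\phi$ is independent of $x_3$ (where the ansatz above, with $h\eh R\phi(x',x'/\eh)$ in place of the oscillating term and $h^2R(x')D(x)$ added to produce $d$, goes through verbatim), and then extending to general $\phi(x',x_3,y)\in C^\infty_c$ by a slicing argument: partition $I$ into $N$ thin strips, freeze $\phi$ at the center of each strip, patch the local constructions with $x_3$-cutoffs, and extract a diagonal subsequence as $N=N(h)\to\infty$.
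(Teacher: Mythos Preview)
Your construction for case (a) is essentially the paper's: your term $h\eta = h(g_\alpha\partial_\alpha u + \alpha n)$ is exactly the paper's in-plane/normal correction, and your compensator $h^2x_3\mu$ with $R^t\mu=(b,0)^t$ differs from the paper's choice $h\,\eh\,\gamma x_3R(b,0)^t$ only by a scalar factor $h/(\eh\gamma)\to 1$; the paper simply folds that compensator into the oscillating corrector rather than writing it separately. The energy passage via \eqref{eq:94} and strong two-scale convergence is also what the paper does.

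There is, however, a genuine error in your treatment of case (b). By Assumption~\ref{assumption:gamma}, $\gamma=\infty$ means $h/\eh\to\infty$, i.e.\ $\eh/h\to 0$, not $\eh/h\to\infty$. Consequently the out-of-plane contribution of the oscillating corrector, namely $\tfrac{\eh}{h}R\partial_3\phi$, tends to zero uniformly --- it does \emph{not} blow up. The ``main obstacle'' you identify is therefore fictitious, and the slicing/diagonalisation workaround is unnecessary. The paper simply takes
\[
u^h(x)\;=\;v^h(x)\;+\;h\,\eh\,R(x')\phi(x,\tfrac{x'}{\eh})\;+\;h^2 R(x')\Big[\int_{-1/2}^{x_3}\! d(x',t)\,dt + x_3\Big(\!\begin{smallmatrix}b(x')\\0\end{smallmatrix}\!\Big)\Big],
\]
with $\phi$ allowed to depend on $x_3$; the $\partial_3\phi$ term disappears in the limit precisely because $\eh\ll h$, and the $h^2$ term supplies both the free vector $d$ and the compensator for $b$, exactly as your $h^2x_3\mu+h^2RD$ would. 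So once the direction of the scaling is corrected, your ansatz already works directly for general $\phi$ without any slicing.
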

\begin{proof}
  We start with the case $\gamma\in(0,\infty)$ and follow \cite[Theorem~{3.2} (ii)]{Schmidt-07}. Consider
  \begin{eqnarray*}
    v^h(x)&:=&u(x')+h\Big[(x_3+\alpha(x'))n(x')+\big(g(x')\cdot\nabla'\big)u(x')\Big],\\
    \quad R(x')&:=&(\nabla'u(x'),\ n(x')).
  \end{eqnarray*}
  Here $(g\cdot\nabla')u$ stands for
  $\sum_{\alpha=1,2}g_\alpha\partial_\alpha u$.
  A direct calculation (see \cite{Schmidt-07}) shows that
  \begin{equation*}
    R^t\nabla_h v^h=\id+h G
  \end{equation*}
  with
  \begin{equation*}
    G:=\left(\begin{array}{cc}
        x_3\secf(x')+ B &
        \begin{array}{c}0\\0\end{array}\\
        -b^t & 0\\
      \end{array}\right),\qquad
    B:=\nabla'g+\alpha\secf,\qquad b:=-\left(\begin{array}{c}
        \partial_1\alpha\\
        \partial_2\alpha
      \end{array}\right)+\secf g.
  \end{equation*}
  To $v^h$ we add an oscillating correction and a term compensating
  for $b$:
  \begin{equation}\label{eq:15}
    u^h(x):=v^h(x)+h\eh \t\phi(x,\frac{x'}{\eh})
  \end{equation}
  where
  \begin{equation*}
    \t\phi(x,y):=R(x')\Big[\,\phi(x,y)+\gamma x_3
    \left(\begin{array}{c}b\\0\end{array}\right)\,\Big].
  \end{equation*}
  Since $\alpha,g$ and $\phi$ are sufficiently smooth, the uniform
  convergence of $u^h$ and $\nabla_hu^h$ directly follows from the
  construction. Moreover, we have
  \begin{equation*}
    R^t\nabla_h u^h=\id+h(G+G^h)
  \end{equation*}
  where
  \begin{equation*}
    G^h:=\left(\,\nabla_y\phi(x,\tfrac{x'}{\eh}),\
      \tfrac{1}{\gamma}\partial_3\phi(x,\tfrac{x'}{\eh}) +
      \tfrac{\eh}{h}\gamma
      \left(\begin{array}{c}b\\0\end{array}\right)\,\right) + C^h.
  \end{equation*}
  Here the remainder term $C^h$ satisfies $\limsup_{h\to0}\frac{|C^h|}{\eh}<\infty$.
  Again, since $u$, $\phi$, $\alpha$ and $g$ are sufficiently smooth, we have
  \begin{equation}
    \limsup\limits_{h\to 0}h\sup_{x\in\Omega}\left(\,|G(x)|+|G^h(x)|\,\right)=0.
  \end{equation}
  Hence, \eqref{ass:frame-indifference}, \eqref{ass:expansion} and
  \eqref{eq:94} yield
  \begin{equation*}
    \limsup\limits_{h\to 0}\left|\frac{1}{h^2}\mathcal
    E^{h,\eh}(u^h)-\int_{\Omega}Q(\tfrac{x'}{\eh},G(x)+G^h(x))\,dx\right|=0,
  \end{equation*}
  and it suffices to show that
  \begin{equation}\label{eq:6}
    \lim\limits_{h\to 0}\int_{\Omega}Q(\tfrac{x'}{\eh},G(x)+G^h(x))\,dx=\text{[R.~H.~S. of \eqref{le73-1a}]}.
  \end{equation}
  By construction the sequence $G+G^h$ strongly two-scale converges to
  some limit $\t G\in L^2(\Omega\times Y,\R^{3\times 3})$ with
  \begin{equation*}
    \sym\t G=\Lambda(\secf,\sym B)
    +\sym\left(\,\nabla_y\phi(x,y),\
      \tfrac{1}{\gamma}\partial_3\phi(x,y)\,\right).
  \end{equation*}
  Hence, by the continuity of convex integral functionals with respect to
  strong two-scale convergence we can pass to the
  limit in \eqref{eq:6} (e.~g. see \cite[Lemma~4.8]{Neukamm-12}). Since $Q(y,F)$ only depends on the symmetric
  part of $F$, this completes the proof for the case
  $\gamma\in(0,\infty)$.

  The proof for $\gamma=\infty$ is similar to the above reasoning.
  Essentially, we only need to replace \eqref{eq:15} by
  \begin{equation*}
     u^h(x)=v^h(x)+h\eh \t \phi(x,\tfrac{x'}{\eh})+h^2 \t d(x),
  \end{equation*}
  where
  \begin{equation*}
     \t \phi(x,y)=R(x')\phi(x,y)\quad\text{and}\quad\t d(x)=R(x')\Big[\int_{-1/2}^{x_3} d(x',t) dt+ x_3\left(\begin{array}{c}b(x')\\0\end{array}\right)\,\Big].
  \end{equation*}
\end{proof}

\begin{proof}[Proof of Theorem~\ref{T:main} (Upper bound)]
  We only consider the case $\gamma\in(0,\infty)$ since the argument
  for $\gamma=\infty$ is the same. We may assume that $\mathcal E_\gamma(u)<\infty$, so
  $u\in\WW(S,\R^3)$. Moreover, since $Q_{2,\gamma}$ (see Lemma~\ref{L:Qgamma}) is continuous, it suffices to prove the
  statement for a dense subclass of $\WW(S,\R^3)$. Hence, by virtue of
  Lemma~\ref{le71}, we may assume without loss of generality that
  $u\in\mathcal A(S)$.
\\
By Lemma~\ref{L:Qgamma} there exist $B\in L^2(S,\R^{2\times 2})$ and
  $\phi\in L^2(S,H^1(I\times\mathcal Y,\R^3))$ such that
  \begin{equation}\label{eq:12}
    \mathcal E^\gamma(u)=\iint_{\Omega\times Y}Q(y,\Lambda(\secf,B)+\sym(\nabla_y\phi,\
    \tfrac{1}{\gamma}\partial_3\phi))\,dydx.
  \end{equation}
  Since $B(x')$ linearly depends on $\secf(x')$ we know in addition that
  $B(x')=0$ for $x'\in\{\,\secf=0\,\}$.

  By a density argument it suffices to show the following: There
  exists a doubly indexed sequence $u^{h,\delta}\in
  H^1(\Omega,\R^3)$ such that
  \begin{align}
    \label{eq:9}
    &\limsup\limits_{\delta\to 0}\limsup\limits_{h\to
      0}||u^{h,\delta}-u||_{L^2(\Omega,\R^3)}=0,\\
    \label{eq:11}
    &\limsup\limits_{h\to 0}\left|\frac{1}{h^2}\mathcal
      E^{h,\eh}(u^{h,\delta})-\mathcal E_\gamma(u)\right|<\delta.
  \end{align}
  Indeed, if this is the case then we obtain the desired sequence by
  extracting a diagonal sequence (e.~g. by appealing to
  \cite[Corollary 1.16]{Attouch-84}).

  We construct  $u^{h,\delta}$ as follows: By a density argument we
  may choose for each $\delta>0$ functions $B^\delta\in
  C^\infty(\overline S,\R^{2\times 2}_{\sym})$ and $\phi^\delta\in
  C^\infty_c(S,C^\infty(I\times\mathcal Y,\R^3))$ such that
  \begin{align}
    \label{eq:7}
    &||B^\delta-B||_{L^2(S)}+||\nabla_y\phi^\delta-\nabla_y\phi||_{L^2(\Omega\times
      Y)}+||\partial_3\phi^\delta-\partial_3\phi||_{L^2(\Omega\times
      Y)}\leq
    \delta^2,\\
    \label{eq:8}
    &B^\delta=0\text{ in a neighborhood of }\{\,\secf=0\,\}.
  \end{align}
  Because $u\in\mathcal
  A(S,\R^3)$ and due to \eqref{eq:8} we
  can find for each $\delta>0$ smooth functions $\alpha^\delta$ and
  $g^\delta$ such that
  $B^\delta=\sym\nabla'g^\delta+\alpha^\delta\secf$. We apply
  Lemma~\ref{le73} to $u$, $g^\delta,\alpha^\delta$ and $\phi^\delta$.
  We obtain a sequence $\{u^{h,\delta}\}_{h>0}$ that uniformly converges
  to $u$ as $h\to 0$. Hence, \eqref{eq:9} is satisfied. Moreover,
  Lemma~\ref{le73} yields
  \begin{equation*}
    \lim\limits_{h\to 0}\frac{1}{h^2}\mathcal
    E^h(u^{h,\delta})=\iint_{\Omega\times Y}Q(y,\Lambda(\secf,B^\delta)+\sym(\nabla_y\phi^\delta,\tfrac{1}{\gamma}\partial_3\phi^\delta)\,dydx.
  \end{equation*}
  By continuity of the functional on the right-hand side,
  combined with \eqref{eq:7} and \eqref{eq:12}, the bound \eqref{eq:11}
  follows.
\end{proof}

{\bf Acknowledgements.} Hornung and Vel\v{c}i\'{c} were supported by Deutsche Forschungsgemeinschaft grant no.
HO4697/1-1.
\\

Peter Hornung and Stefan Neukamm

{\sc Max-Planck-Insitut f\"ur Mathematik in den Naturwissenschaften
\\
Inselstr. 22
\\
04103 Leipzig
\\
Germany
}
\\

Igor Vel\v{c}i\'{c}

{\sc Max-Planck-Insitut f\"ur Mathematik in den Naturwissenschaften
\\
Inselstr. 22
\\
04103 Leipzig
\\
Germany
}

and

{\sc Basque Center of Applied Mathematics
\\
Alameda de Mazarredo 14
\\
48009 Bilbao
\\
Spain
}

\bibliographystyle{alpha}

\end{document}